\newcommand{\tab}{\hspace{5mm}}
\newcommand{\tuple}[1]{\ensuremath{\left\langle #1\right\rangle}}
\newcommand{\et}{\wedge}
\newcommand{\ou}{\vee}
\newcommand{\non}{\neg}
\newcommand{\K}{\mathbf{K}} 
\newcommand{\p}{\mathbf{P}}	
\newcommand{\M}{\mathcal{M}} 
\newcommand{\KK}{\mathcal{K}} 
\newcommand{\C}{\mathcal{C}} 
\newcommand{\R}{\mathcal{R}} 
\newcommand{\EL}{\mathsf{EL}}
\newcommand{\CEL}{\mathsf{CEL}}
\newtheorem{theorem}{\bf Theorem}
\newtheorem{example}[theorem]{\bf Example}
\newenvironment{proof}{\begin{trivlist}\item[]{\bf Proof.}}{\hspace*{\fill} 
$\blacksquare$ \end{trivlist}}
\begin{document}

\title{Contextual Epistemic Logic}

\author{\textsc{Manuel Rebuschi}\\ \small{L.H.S.P. -- Archives H. Poincaré}\\ \small{Université Nancy 2}\\ \small{\textsf{manuel.rebuschi@univ-nancy2.fr}} \and \textsc{Franck Lihoreau}\\ \small{Instituto de Filosofia da Linguagem}\\ \small{Universidade Nova de Lisboa}\\ \small{\textsf{franck.lihoreau@fcsh.unl.pt}} }
\date{}

\maketitle

\selectlanguage{english}

\begin{abstract}{One of the highlights of recent informal epistemology is its growing theoretical emphasis upon various notions of context. The present paper addresses the connections between knowledge and context within a formal 
approach. To this end, a ``contextual epistemic logic'', \textsf{CEL}, is proposed, which consists of an extension of standard \textsf{S5} epistemic modal logic with appropriate reduction axioms to deal with an extra contextual operator. 
We describe the axiomatics and supply both a Kripkean and a dialogical semantics for \textsf{CEL}. An illustration of how it may fruitfully be applied to informal epistemological matters is provided.}
\end{abstract}


\section{Introduction}

The formal approach to knowledge and context that we propose in this  
paper was originally driven not only by formal logical concerns but  
also by more informal epistemological concerns.

In the last two or three decades, indeed, epistemology has seen two  
major ``turns'':
\begin{itemize}
\item a ``new linguistic turn'', as Ludlow \cite{Ludlow05} calls it, through the increased reliance, in  
contemporary epistemological debates, upon ``evidence'' regarding how  
we ordinarily \emph{talk} about knowledge, most notably as a result of the  
flourishing discussions about the purported epistemological role of  
various notions of context and the relative merits of  
``contextualism'' 
over ``scepticism'', ``anti-scepticism'', and ``subjectivism'', \emph{inter alia};
\item a ``logical turn'', through the rising conviction that  
discussions in informal epistemology may benefit from formal  
epistemology -- epistemic logic, formal learning theory, belief  
revision, and so on --, notably by applying the methods of epistemic  
logic in order to gain insights into traditional informal  
epistemological issues. Recent work by van Benthem  
\cite{vanBenthem06}, Hendricks \cite{Hendricks04, Hendricks06a} and  
Hendricks \& Pritchard \cite{Hendricks06b}, and Stalnaker \cite{Stalnaker06} count  
as representative of this trend.
\end{itemize}

An important part of the background for the present paper consists of  
the project of taking advantage of the logical turn in order to record  
some of the main lessons to be drawn from the linguistic turn of 
epistemology, the acknowledgement of the possible epistemological role  
of context to start with.

This paper, at the junction of informal and formal epistemology, focuses on the question how to introduce the notion of context into epistemic logic.
It provides a new logic, contextual epistemic logic (\textsf{CEL}), an extension of standard \textsf{S5} epistemic modal logic with appropriate reduction axioms to deal with an extra contextual operator.

In section \ref{contextepist} contemporary informal epistemological discussions over context are briefly exposed, as well as three strategies available to handle context in a formal way. The authors already studied one of them in \cite{LihoreauRebuschi07} and the other two seem quite natural, one of which is accounted for in the present paper. The section ends with a presentation of public announcement logic (\textsf{PAL}), which is technically very close  to \textsf{CEL} yet very different from it in spirit: after a comparison of the respective objectives of the two logics we mention what must be changed in \textsf{PAL} to reach a logic for contexts.

In section \ref{cel} the syntax and Kripke semantics of \textsf{CEL} are given, and a complete proof system. In the subsequent section, we present a dialogical version of \textsf{CEL}: after a general presentation of dialogical logic for (multi-)\textsf{S5} modal systems, we add context relativization and prove its completeness. 

Finally, in section \ref{applications} we provide a connection between our formal definitions and standard epistemological positions (scepticism and anti-scepticism, contextualism, subjectivism), and we show a few applications of \textsf{CEL} to specifically epistemological questions.

\section{Contextual Epistemology \label{contextepist}}

\subsection{From an informal approach to knowledge and context...}

Constitutive of the linguistic turn in epistemology is contextualism.  
Contextualism can be viewed as an attempt to reconcile scepticism and  
anti-scepticism, and subjectivism as an alternative to contextualism.  
All these positions are better thought of as positions about what a  
satisfactory account for the truth of knowledge sentences like ``$i$  
knows that $\varphi$'' or ``$i$ does not know that $\varphi$'' must  
look like. A way of representing what it takes for a knowledge  
sentence to be true is by means of an ``epistemic relevance set'',  
i.e. a set of ``epistemically relevant counter-possibilities'' that an  
agent must be able to rule out, given the evidence available to him,  
for it to be true that he knows a proposition.

Thus, according to \textit{scepticism} -- also called ``sceptical invariantism'', and defended by Unger \cite{Unger71} for instance -- the epistemic standards are so  
extraordinarily stringent that for any (contingent) proposition, all  
logical counter-possibilities to this proposition will count as  
epistemically relevant. That is, for it to be true that an agent $i$
knows a proposition $\varphi$, $i$ must rule out
the entire set of
logical possibilities in which not-$\varphi$;
this being an
unfulfillable task for any $i$ and $\varphi$, it
is never true that
anyone knows anything except, perhaps, necessary
truths.\footnote{One might want to be a sceptic
as far as contingent truths are concerned while
not being a sceptic with respect to necessary
truths. Such truths include logical validities
as well as
analytic truths. Taking the latter into account
within a modal-logical framework can be done by
adopting meaning postulates.}
On the contrary, for \textit{anti-scepticism} -- also called ``Moorean invariantism'' and defended by Austin \cite{Austin46} and Moore \cite{Moore62} -- 
the  
standards for the truth of ``$i$ knows that $\varphi$'' being those  
for an ordinarily correct utterance of this sentence, they are lax  
enough to make (a possibly important) part of our ordinary knowledge  
claims turn out true: for any proposition, not all logical  
counter-possibilities to this proposition will count as epistemically  
relevant. That is, it will be true that $i$ knows that $\varphi$ only  
if $i$ rules out a given (proper) subset of possibilities in which  
not-$\varphi$.

Scepticism and anti-scepticism are \textit{absolutist} views  
about knowledge in the sense that for both of them, whether a logical  
counter-possibility to a proposition is epistemically relevant or not  
depends only on what proposition it is that is purportedly known and  
once this proposition is fixed, the associated epistemic relevance set  
is not liable to vary.
According to \textit{relativist} views, whether a logical  
counter-possibility to a proposition is epistemically relevant or not  
does not depend only on what proposition it is; it also depends on the  
context, for some sense of ``context''. 

For instance, according to  
\emph{contextualism} -- defended by authors like Cohen \cite{Cohen00}, De Rose \cite{DeRose95}, Lewis \cite{Lewis96} --
 the context in question will be that of the  
possible ``knowledge ascriber(s)''. Although we'd better talk of  
contextualism in the plural, a common contextualist assumption is that  
whilst the circumstances of the purported knower are fixed, the  
epistemic standards, therefore the epistemic relevance set, may vary  
with features of the context of the knowledge ascriber(s) such as what  
counter-possibilities they attend to, what propositions they  
presuppose, what is at stake in their own context, etc.
In contrast, \emph{subjectivism} -- also called ``subject-sensitive invariantism'' or ``sensitive moderate invariantism'' -- is defended by authors like Hawthorne \cite{Hawthorne04} and Stanley \cite{Stanley05}. This view
 has it that such factors -- viz.  
attention, interests, presuppositions, stakes, etc. -- are not the  
attention, interests, presuppositions, stakes, etc., of the knowledge  
ascriber, but those of the ``knower'' himself. That is, the epistemic  
standards, therefore the epistemic relevance set, may vary with the  
context of the purported ``knower'', even if no change occurs in the  
circumstances he happens to be in.\footnote{Absent from the foregoing discussion will be another prominent view on the matter, viz. what may be coined ``circumstancialism'' -- the view, held by Dretske \cite{Dretske70, Dretske81} and Nozick \cite{Nozick81}, that the relevance set depends on the objective situation the subject happens to be in. The reason for not discussing this view here is that one constraint on the formal developments in this paper was to stick to epistemological views whose associated epistemic logics (1) were normal, i.e. incorporating at least the principles of the smallest normal modal logic \textsf{K} -- in particular the \emph{Distribution Axiom} and the \emph{Knowledge Generalization Rule}  -- and thus (2) fully characterized some class of standard Kripke models. This is not the case with circumstancialism which drops the two \textsf{K}-principles just mentioned. However, capturing circumstancialist epistemic logic semantically can be done in several ways compatible with a modal approach, two of the most straightforward being (1) by adding ``awareness functions'' into the standard possible worlds models as in \cite{Fagin88}, (2) or by augmenting such models with ``impossible possible worlds'' as in \cite{Rantala82}. Another important epistemological view is the ``assessment sensitivity'' (sometimes called ``relativist'') view, held by MacFarlane \cite{MacFarlane05} and Richard \cite{Richard04} for instance, according to which the relevance set depends neither on the subject's situation or context nor on the attributor's context, but on the context in which a knowledge claim made by an attributor about a subject is being assessed for truth or falsity. For the same reason as before it will not be discussed here, since formalizing it would require reliance on a semantics allowing formulas to be evaluated not only relative to a world, but also to both a context of attribution and a context of assessment -- i.e. a non-standard semantics, perhaps in the vicinity of \cite{Egré06}'s ``token semantics''. These issues will not be pursued here.}

So, depending on whether one opts for an absolutist or a  
relativist view about knowledge, one will assume that a notion of  
context has an epistemological role to play or not. A first incursion  
into formalizing the possible connections between knowledge and context 
and the four epistemological views\footnote{It may appear to some that scepticism does not seem to be a  
respectable language game with a special knowledge operator, but a  
defective language game with a respectable knowledge operator. This  
seems intuitively correct, of course. So why bother trying to capture  
scepticism in a formal framework anyway? Here are two  
reasons. First, the intuition in question might simply be an effect of  
a prejudice against scepticism and thus cannot be appealed to in  
support of a depreciative view of scepticism without begging the  
question; second, our committment is to find a formal framework that  
makes everything ``respectable'' in each of the epistemological  
positions it is meant to capture: we want both their respective  
``language game'' and ``knowledge operator'' to be treated as  
``respectable''.} was undertaken in  
\cite{LihoreauRebuschi07}.


\subsection{... to formal approaches to reasoning about knowledge and context}

The role of context in the above epistemological discussion
is to impose a restriction of the relevant set of possible worlds -- not to go from a possible world to another one. Contexts in a modal formalization  thus cannot simply be reduced to standard modalities. 

As a consequence, in order to represent contexts in epistemic logic three ways seem available:
\begin{enumerate}
	\item use non-standard models, i.e. put the context in the metalanguage and evaluate each (standard epistemic) formula relatively to some world and context;
\vspace{-0,3cm}
	\item use standard models with standard modality, within an extension from basic modal logic;
\vspace{-0,3cm}
	\item use standard models with a non-standard interpretation for context modalities.
\end{enumerate}
The first option, investigated in \cite{LihoreauRebuschi07}, requires that the notion of context enter the metalanguage and that the formulas of the object-language be interpreted in ``contextual models''.
The second track would require a kind of hybrid logic to account for the intersection of two accessibility relations. In the remainder of the subsection we briefly discuss the first two approaches before focusing on the third strategy.

\subsubsection{Contextual models}
Formalizing contexts while sticking to the standard syntax of epistemic logic implies the adoption of non-standard models. Contextual models are usual Kripke models for multi-S5 epistemic languages $\M=\tuple{W,\{\KK_i\}_{i\in I},V}$
augmented with a pair $\left\langle \C, \R\right\rangle$, where:
$\C = \left\{c_j: j \in J\right\} \neq \varnothing$ is a set of  
``contexts'', for $I \subseteq J$,\footnote{A context $c_i$ is thus assigned to every agent $i\in I$, but additional contexts may be added for groups of agents, conversations, and so forth.} 
and
$\R: \C \rightarrow (W\rightarrow\wp(W))$ is a function of ``contextual  
relevance'' associating with each context $c_j$, for each world $w$, the set $\R(c_j)(w)$ of worlds relevant in that context at that world.

Truth in contextual models is then defined relatively to a world and a context. The clauses are the usual ones for propositional connectives in the sense that contexts do not play any role on them,
whereas contexts can modify the evaluation of epistemic operators in one of the following four ways:
\begin{center}
\begin{small}
\begin{tabular}{rl}
$\M, c_i, w \vDash \K_j\varphi$ iff \\

(1.1) & for all $w'$, if  
$\KK_jww'$ and $w'\in\R(c_i)(w)$, then $\M, c_i, w' \vDash \varphi$.\\

(1.2)&  for all $w'$, if $\KK_jww'$ and $w'\in\R(c_i)(w)$, then $\M, c_j, w' \vDash \varphi$.\\

(2.1)&for all $w'$, if $\KK_jww'$ and $w'\in\R(c_j)(w)$, then $\M, c_i, w' \vDash \varphi$.\\

(2.2)&for all $w'$, if $\KK_jww'$ and $w'\in\R(c_j)(w)$, then $\M, c_j, w' \vDash \varphi$.\\
\end{tabular}
\end{small}
\end{center}
The definitions lead to four different logics, i.e. to four interpretations of the epistemic operator $\K_j$. 

Taking epistemic accessibility relations to be equivalence relations,  
a formal characterization of the aforementioned  
informal views of knowledge can be given through a proper choice among the four  
cases (1.1)--(2.2) or/and through proper restrictions on the contextual  
relevance function. The two absolutist views of knowledge --  
scepticism and anti-scepticism -- can thus be associated with case (1.1),
differing from each other only in their respective restrictions on  
$\R$, viz. the restriction that for all $i$ and $w$, $\R(c_i)(w) = W$  
for scepticism -- hence a restriction enabling scepticism to drop the  condition $w'\in\R(c_i)(w)$ from the truth conditions of epistemic sentences -- and the minimal restriction that for all $i$ and $w$,  
$\R(c_i)(w) \subsetneq W$ for anti-scepticism. In contrast, the two  
relativist views of knowledge can be associated with the (--.2)  
cases, viz. case (1.2) for contextualism and case (2.2) for  
subjectivism.\footnote{A possible worry with these formal definitions is that they end up  
making scepticism look like a limiting case of anti-scepticism and the  
characterisation of the latter unsatisfactory for anyone interested in  
a more substantial definition. But first, what we were after was  
more to contrast absolutist with relativist views than to contrast  
scepticism with anti-scepticism. Second, the definition we gave of  
anti-scepticism -- viz. ``for all $i$ and \emph{w}, $\R(c_i)(w)  
\subsetneq W$'' -- was meant to be broad enough so as to allow for  
more specific characterisations of one's favourite version of  
anti-scepticism. This can be done, for instance, by listing all the  
worlds one thinks should be considered epistemically relevant, that  
is, included in $\R(c_i)(w)$, or by providing a general criterion for  
telling the worlds that are epistemically relevant from those that  
aren't (in terms of ``the closest'' or ``close enough'' possible  
worlds, for instance).\label{scepticismdef}}

Within this framework one can express \emph{truth-in-a-context} for  knowledge claims -- an interesting result for contemporary informal  epistemology whose interest in the connection between knowledge and  context lies primarily in the issue of whether the \emph{truth} of knowledge sentences should be taken as absolute or relative. 

Moreover, with contextual models one can account for the logical behaviour of agents of a given  
epistemological type (sceptical, anti-sceptical, contextualist, or subjectivist) reasoning about the knowledge of other agents regardless  
of their epistemological type; that is, one can ask such things as \emph{If contextualism is assumed, then if agent 1 knows that agent 2 knows that $\varphi$, does agent 1 know that $\varphi$?} 
One can also ask such things as \emph{If a contextualist knows that a  subjectivist knows that $\varphi$, does the contextualist know that  $\varphi$?} \\

However, despite its merits, the contextual models framework is
unlikely to win unanimous support from philosophers of language. For
consider satisfaction for epistemic formulas of the form
$\mathbf{K}_j\varphi$ when epistemic operator $\mathbf{K}_j$ is taken
in its (1.2) and (2.2) interpretations. In these cases, the epistemic
operator $\mathbf{K}_j$ manipulates the context parameter against
which the relevant epistemic formula is evaluated: the truth of
$\mathbf{K}_j\varphi$ relative to a context $c_i$ (and world $w$)
depends on the truth of the embedded $\varphi$ relative to
\textit{another context} $c_j$ (and world $w'$).

Now, Kaplan \cite{Kaplan89} famously conjectured that natural language
had no such devices as context-shifting operators -- which he
considered to be ``monsters''. So, if he is right, there can be no
natural language counterpart for $\mathbf{K}_j$ in cases (1.2) and
(2.2). Schlenker \cite{Schlenker03} and others have recently
challenged Kaplan's conjecture, notably by arguing that natural
language allows context-shifting to occur within propositional
attitudes. Nevertheless, the existence of ``monsters'' remains
controversial amongst philosophers of language, and the question of
context-shifting within attitudes is far from having been settled for
that specific kind of attitude reports formed by knowledge claims.

An advantage of the contextual epistemic logic that will be introduced
in the present paper is that it is immune to the charge of
monstrosity: although something resembling context-shifting takes
place in that framework, its language is ``monster-free'' since its
formulas are evaluated against a world parameter only and contexts are
of a purely syntactic nature.

\subsubsection{Hybrid logic}

The semantic intuition lying behind context-relativized attributions of knowledge is that one has to restrict the evaluation to the set of the contextually relevant worlds  to check whether in this restricted set, the agent knows such and such proposition. If one wants to go back to standard Kripke models and wishes to handle contexts as standard (let us say, S5) modalities  $[c_j]$, it is thus required to consider the \textit{intersection} of two accessibility relations: an agent $i$ will be said to know $\varphi$ (at world $w$) relatively to a context $c_j$ 
-- which can be formalized by: $\M,w\vDash\{[c_j]\K_i\}\varphi$ --
if and only if $\M,w'\vDash\varphi$ at every world $w'$ such that $\R_{j,k}ww'$, where $\R_{j,k}$ is the intersection of the two accessibility relations.

Now it is known that the intersection of two relations cannot be defined within basic modal logics \cite{GargovEtAl87}. One has to go beyond basic modal logic to express sound axioms for context-relativized knowledge. Using the hybrid nominals $\nu_x$ and operators $@_x$, we get the natural axiom schema ($\p_i$ being the dual of $\K_i$):\tab
$[c_j]\K_i\varphi \leftrightarrow 
((\p_i\nu_x \et \langle c_k\rangle\nu_x) \to @_x[c_l]\varphi)$, where $\tuple{k,l}\in\{i,j\}\times\{i,j\}$.

Starting from Blackburn's \cite{Blackburn01} dialogical version of hybrid logics one could give a natural account of epistemic logic with standard contextual modalities. This is left for another paper.

\subsubsection{Model-shifting operators}
The way followed in this paper has been paved by recent works about logics of knowledge and communication. The idea, going back to \cite{Plaza89}, is to add \textit{model-shifting operators} to basic epistemic logics.

Van Benthem et al. \cite{vanBenthemetal06} recently accounted for model-shifting operators connected to public announcement. The idea is that the semantic contribution of an announcement, let us say $\varphi$, is to eliminate all those epistemic alternatives falsifying $\varphi$, so that a new model $\M_{|\varphi}$ is obtained relative to which subsequent formulas are evaluated. In this approach, context modalities are clearly not usual modalities.

In the present paper, we propose to use this Public Announcement Logic ($\mathsf{PAL}$) with the required modifications. Before going further into details, let us just add that a frame such as $\mathsf{PAL}$ can be combined with some dynamic account of the context operators $[c_i]$. Van Benthem et al.'s
paper \cite{vanBenthemetal06} provides such a dynamic logic, using standard PDL with an epistemic interpretation. In \cite{LihoreauRebuschi07} the authors of the present paper accounted for contexts using a formalism based on DRT. Anyway, the way contexts are handled (the dynamic part of the formalism) is relatively independent from their role in epistemic attributions.


\subsection{From $\mathsf{PAL}$ to $\mathsf{CEL}$}

\subsubsection{$\mathsf{PAL}$ in a nutshell}

The notion of a public announcement or communication is just that of ``a 
statement made in a conference room in which all agents are present'' 
according to Plaza \cite{Plaza89}, and that of ``an epistemic event where 
all agents are told simultaneously and transparently that a certain formula 
holds right now'' according to van Benthem \emph{et al.} 
\cite{vanBenthemetal06}. The latter intend to model this informal notion by means of a 
modal operator $[\varphi]$, thus allowing for formulas of the form 
$[\varphi]\psi$, read intuitively as ``$\psi$ holds after the 
announcement of $\varphi$.''

The language $\mathcal{L}_{\textsf{PAL}}$ of \textsf{PAL}, the logic of 
public announcement proposed by van Benthem \emph{et al.}, results from 
augmenting the basic epistemic modal language 
$\mathcal{L}$ with operators of the 
form $[\varphi]$ where $\varphi$ is any formula of 
$\mathcal{L}_{\textsf{PAL}}$. The semantics for the resulting language 
$\mathcal{L}_{\textsf{PAL}}$ is based on standard Kripke models for 
$\mathcal{L}$, and the definition of $\vDash$ for 
atoms, negation, conjunction and knowledge operators is as usual with such 
models. The more ``unusual'' feature of the semantics of 
$\mathcal{L}_{\textsf{PAL}}$ is with the clause of satisfaction for 
$[\varphi]$, viz.:
\begin{quote}
\begin{tabular}{lcl}
$\M,w \vDash [\varphi]\psi$ & iff & if $\M,w \vDash \varphi$ then $\M_{|\varphi},w 
\vDash \psi$,
\end{tabular}
\end{quote}
where $\M_{|\varphi}$ is an ``updated model'' consisting of a tuple 
$\left\langle W', \KK'_1, \ldots, \KK'_m, \mathcal{V}'\right\rangle$, with:
\begin{itemize}
\item $W' = \left\{ v\in W: \M,v\vDash \varphi\right\}$,
\vspace{-0,3cm}
\item $\KK'_i = \KK_i \cap (W' \times W')$ for all agents $i \in \left\{1, \ldots, m\right\}$,
\vspace{-0,3cm}
\item $\mathcal{V}'(p) = \mathcal{V}(p) \cap W'$ for all atomic formulas $p$.
\end{itemize}
This appeal to updated models, obtained by restricting a given Kripke model 
to those worlds where a given formula $\varphi$ holds, is meant to capture 
the insight that the epistemic effect of a public announcement of $\varphi$ 
at a time $t$ is that at time $t+1$, all the agents involved 
will have deleted the possible worlds where they do not know that 
$\varphi$.

Our proposed formalism for reasoning about knowledge and context, although 
it is strongly inspired by that for reasoning about public announcements, 
does not appeal to anything like updated models and offers a slightly more 
complicated semantics for the corresponding language.\\

\subsubsection{Different means for different goals} Let us first compare the objectives of the two formalisms. $\mathsf{PAL}$ is specifically designed to account for shifts of common knowledge following public announcements. As a successful annoucement  is made, its content $\varphi$ becomes common knowledge and the model is restricted to all and only those possible worlds where $\varphi$ is true. 

The situation is different with $\mathsf{CEL}$: here sentences are presupposed (and as a consequence, known by each agent) as long as they remain implicit; when a presupposition is made explicit -- asserted, rejected, questioned\ldots -- it is removed from the ``context''. Such a context does not determine the set of epistemic possibilities, but it determines a set of  epistemically relevant possibilities. 

For instance, in usual (non-sceptical) contexts it is assumed that $b$, ``we are all brains in vats'', is false; so $\non b$ belongs to the context. Now if a provocative Sceptic enters the scene and asks whether we are sure that we are not brains in vats, or, better, asserts that we actually are brains in vats, then the context changes: $b$ becomes epistemically relevant. It does not mean that $b$ is true at every accessible world / epistemic possibility, since such a ``public announcement'' is not necessarily approved by all the agents; but $b$ is now a (counterfactual) possibility that needs to be rejected to grant knowledge.\footnote{~As a simplification, we consider only literals as presuppositions; it means that an assertion whose content is a complex proposition will modify the context by removing all the literals occurring in it.
} 

Hence the modifications of contexts according to $\mathsf{CEL}$ do not yield any modification of the accessibility relation: it expands or restricts the whole set of possible worlds. Whereas $\mathsf{PAL}$ is concerned with changes of (common) knowledge of agents, $\mathsf{CEL}$ aims at accounting for changes of knowledge attributions.
The contrast between the two theories is summarized in the following table:

\begin{footnotesize}
\begin{center}
\begin{tabular}{|c|c|}
\hline
$\mathsf{PAL}$ & \textbf{$\mathsf{CEL}$}\\
\hline
Explicit & Implicit\\
Public announcement  & (Proto-)Context \\
$[\varphi]$ & $[c_i]$\\
\hline
Any (complex) formula & Conjunction of literals\\
\hline
What is \textit{a posteriori} known: & What is \textit{a priori} assumed:\\
Set of epistemic possibilities & Set of counterfactual possibilities\\
& (``epistemically relevant possibilities'')\\
\hline
Restriction of the accessibility relation: & Restriction or expansion of the  set of \\
&relevant possible worlds:\\
Removing worlds & Removing or adding worlds\\
\hline
Knowledge shifting & Knowledge attribution shifting\\
\hline
\end{tabular}
\end{center}
\end{footnotesize}
\vspace{0,2cm}

\subsubsection{Consequences} $\mathsf{PAL}$ is useful 
to reach $\mathsf{CEL}$ for it provides a formalization of a logic with a model-shifting operator. However, a few changes are required to adapt $\mathsf{PAL}$ to a logic with contexts:
\begin{itemize}
	\item Model-shifting operators $[\varphi]$ of $\mathsf{PAL}$ are interpreted so that sizes of models are systematically decreasing; in $\mathsf{CEL}$, model-shifting operators $[c_i]$ must allow increasing as well as decreasing sizes of models. 
	
	\item Whereas the announcements of $\mathsf{PAL}$ are obviously linguistic, the contexts of $\mathsf{CEL}$ need not be linguistic in nature; what is required is that they be depicted by some linguistic sentence. 
	
	\item The outcome of a public announcement for the agents' knowledge is straightforward, as is the corresponding axiom schema of $\mathsf{PAL}$. By contrast, several kinds of subtle interactions between context operators and epistemic modalities
	can be grasped in $\mathsf{CEL}$.
	
\end{itemize}
As will be shown in the next section, these slight divergences lead to an important dissimilarity at the
semantic level: $\mathsf{CEL}$ semantics is much more inelegant than $\mathsf{PAL}$ semantics. This will be a good reason enough to go over to the dialogical framework.\\


\section{Contextual Epistemic Logic \label{cel}}

In this section we first introduce the syntax and semantics of 
$\mathsf{CEL}$, then a sound and complete proof system. As was explained above, $\mathsf{CEL}$ is strongly inspired by the model of $\mathsf{PAL}$.

\subsection{$\mathsf{CEL}$: Syntax and Kripke semantics}

\subsubsection{Standard epistemic logic} 
The standard formal approach to reasoning about \emph{knowledge}  
starts with the choice of a basic epistemic modal language. This  
language $\mathcal{L}^m_{\textbf{K}}(\mathcal{A}t)$ consists of the  
set of formulas over a finite or infinite set $\mathcal{A}t = \left\{p_0, p_1, \ldots\right\}$ of atomic formulas  and a set $J = \left\{1, \ldots,  
m\right\}$ of $m$ agents, given by the following form ($F$ standing for a formula of $\mathcal{L}^m_{\textbf{K}}(\mathcal{A}t)$):
\begin{quote}
$F ::= \mathcal{A}t\ |\ \neg F\ |\ (F\et F)\  
|\ \K_j(F)$.
\end{quote}

This language is then  
interpreted in Kripke models consisting of tuples $\M = \left\langle  
W, \R_1, \ldots, \R_m, \mathcal{V}\right\rangle$, where:
$W \neq \varnothing$ is a set of ``worlds'';
$\R_j \subseteq W \times W$ is a relation of ``epistemic  
accessibility'', for all $j \in J$;
$\mathcal{V}:\mathcal{A}t \rightarrow \wp(W)$ is a  
``valuation'' mapping each atomic formula onto a set of worlds.\\

\subsubsection{$\mathsf{CEL}$ Syntax} In order to add contexts to basic epistemic languages, one can choose between two equivalent notations (see \cite{vanBenthem00}). For convenience -- especially for the dialogical approach -- we will not use the modal contextual operator prefixing formulas $[c_i]\varphi$ but a syntactic relativization of formulas: $(\varphi)^{c_i}$. 
Moreover, each context $c_i$ is characterized by a conjunction of literals (i.e. atoms and negation of atoms); the ``context formula'' characterizing a context $c_i$  will be referred to by the same symbol, $c_i$.\footnote{Contexts need not be \textit{identified} with context formulas; they can be partial models or incomplete possible worlds for instance.} So ``context formulas'' $c$ can be defined as follows:
\begin{quote}
$c ::= \mathcal{A}t\ |\ \neg \mathcal{A}t\ |\ \top\  
|\ \bot\ |\ c\et c$.
\end{quote}

We thus recapitulate the syntax of epistemic languages with context $\mathcal{L}^m_{\textbf{KC}}(\mathcal{A}t)$ ($G$ standing for a formula of $\mathcal{L}^m_{\textbf{KC}}(\mathcal{A}t)$), built upon a set of contexts $\C = \{c_i\}_{i\in I}$ ($J\subseteq I$, $J$ being the set of agents):
\begin{quote}
$G ::= \mathcal{A}t\ |\ \neg G\ |\ (G\et G)\  
|\ \K_j(G)\ |\ (G)^{c_i}$.
\end{quote}
Formulas containing at least one operator $\K_j$ are called \textit{epistemic formulas}; formulas containing at least one subformula of the kind $(\varphi)^{c_i}$ are said \textit{context-relativized}, and formulas being not context-relativized are said \textit{absolute}. As is immediatly seen from the definition, $\mathsf{EL}$ is a syntactic fragment of $\mathsf{CEL}$.\\

\subsubsection{$\mathsf{CEL}$ Kripke semantics}
The contextual epistemic language $\mathcal{L}^m_{\textbf{KC}}(\mathcal{A}t)$  
is interpreted relatively to the same Kripke models as $\mathcal{L}^m_{\textbf{K}}(\mathcal{A}t)$. The satisfaction of atoms, absolute negations and conjunctions is defined as usual. For epistemic operators and contextual relativization, the semantics is a bit more complicated:
\begin{small}
\begin{center}
\begin{tabular}{rcl}
$\M,w \vDash p$&iff& $w\in \mathcal V(p)$ \tab ($p$ being atomic)\\
$\M,w \vDash \non\chi$&iff& $\M,w\nvDash\chi$
\\
$\M,w \vDash \chi\et\xi$&iff& $\M,w\vDash\chi$ and $\M,w\vDash\xi$ \\
$\M,w \vDash \K_j\varphi$&iff& for every $w'\in W$ such that $R_jww'$: $\M,w' \vDash\varphi$\\
$\M,w \vDash (p)^{c_i}$&iff& $\M,w\nvDash c_i\;$ or $\;\M,w \vDash p$ \tab ($p$ being atomic) \\
$\M,w \vDash ((\varphi)^{c_k})^{c_i}$&iff& $\;\M,w \nvDash c_i$ or $\;\M,w \vDash (\varphi)^{c_k}$\\

$\M,w \vDash (\non\varphi)^{c_i}$&iff& $\M,w\nvDash c_i\;$ or $\;\M,w \nvDash (\varphi)^{c_i}$\\

$\M,w \vDash (\varphi\et\psi)^{c_i}$&iff& $\;\M,w \vDash (\varphi)^{c_i}$ and $\;\M,w \vDash (\psi)^{c_i}$\\

$\M,w \vDash (\K_j\varphi)^{c_i}$&iff& $\M,w\nvDash c_x\;$ or $\;\M,w \vDash \K_j(\varphi)^{c_y}$ \\
&&where $\tuple{x,y}$ is chosen in $\{i,j\}\times\{i,j\}$\\
\end{tabular}
\end{center}
\end{small}

There are four different clauses hidden in the last one, depending on the values of $x$ and $y$. Each choice regiments a specific position about the interaction between knowledge and context. We will refer to these four possibilities as (1.1), (1.2), (2.1), and (2.2), sometimes adding an explicit exponent to the knowledge operator -- $\K_j^{1.1}$, $\K_j^{1.2}$, $\K_j^{2.1}$, and $\K_j^{2.2}$ respectively -- with the following understanding:

\begin{small}
\begin{center}
\begin{tabular}{rcl}

$\M,w \vDash (\K^{1.1}_j\varphi)^{c_i}$&iff& $\M,w\nvDash c_i\;$ or $\;\M,w \vDash \K^{1.1}_j(\varphi)^{c_i}$ \\ 

$\M,w \vDash (\K^{1.2}_j\varphi)^{c_i}$&iff& $\M,w\nvDash c_i\;$ or $\;\M,w \vDash \K^{1.2}_j(\varphi)^{c_j}$ \\ 

$\M,w \vDash (\K^{2.1}_j\varphi)^{c_j}$&iff& $\M,w\nvDash c_j\;$ or $\;\M,w \vDash \K^{2.1}_j(\varphi)^{c_i}$ \\ 

$\M,w \vDash (\K^{2.2}_j\varphi)^{c_i}$&iff& $\M,w\nvDash c_j\;$ or $\;\M,w \vDash \K^{2.2}_j(\varphi)^{c_j}$ \\

\end{tabular}
\end{center}
\end{small}
Remarks:

\noindent -- In the notation $\K^{u.v}_j$, the first superscript $u$ corresponds to the contextual condition, and the second one $v$ to the context according to which the evaluation is to be continued; the possible values of $u$ and $v$ are $1$ for the current context, and $2$ for the agent.

\noindent -- It is worth noticing that no restriction of the model $\M$ is required for the evaluation of context-relativized formulas.

\vspace{0,2cm}
According to our definition the context relativization $(\varphi)^{c_i}$ of a non-epistemic formula $\varphi$ is nothing but a notational variant for the conditional $c_i\to\varphi$. Since such a relativization has no further impact on epistemic $\K^{1.1}$ formulas, the notion of ``context'' carried by our formalism will appear to be  non-committing for supporters of absolutist conceptions of knowledge. However, this innocent account of context will turn out to be sufficient to model contextualist and subjectivist epistemologies. With $\CEL$ context friends and context ennemies can thus be put together onto the same neutral field.


\subsection{$\mathsf{CEL}$: Proof system}

\subsubsection{Proof systems} There are several proof systems depending on the interaction between epistemic operators and context. The differences are given through axioms of Contextual knowledge.
Each proof systems for $\mathsf{CEL}$ is that for multi-modal $\mathsf{S5}$ epistemic
logic $\mathsf{EL}$ plus the following schemas:
\begin{small}
\begin{center}
\begin{tabular}{rl}
Atoms & $\vdash (p)^{c_i}\leftrightarrow ({c_i}\to p)$\\
Contextual negation & $\vdash(\non\varphi)^{c_i} \leftrightarrow ({c_i}\to \non(\varphi)^{c_i})$\\
Contextual conjunction& $\vdash(\varphi\et\psi)^{c_i} \leftrightarrow ((\varphi)^{c_i}\et(\psi)^{c_i})$ \\
Context iteration &$\vdash ((\varphi)^{c_k})^{c_i} \leftrightarrow (c_i\to(\varphi)^{c_k})$\\
Contextual Knowledge \footnotesize{($\tuple{x,y}\in\{i,j\}\times\{i,j\}$)}&  $\vdash(\K_{j}\varphi)^{c_i}\leftrightarrow({c_x}\to \K_j(\varphi)^{c_y})$
\end{tabular}
\end{center}
\end{small}
as well as the following rule of inference:
{\small 
\begin{center}
Context generalization \tab From $\vdash \varphi$, infer $\vdash (\varphi)^{c_i}$.\\
\end{center}
}
Like for the semantics of $\CEL$, the schema for Contextual Knowledge can be made more explicit through the following versions:
\begin{small}
\begin{center}
\begin{tabular}{rl}
1.1-Contextual Knowledge &  $\vdash(\K^{1.1}_{j}\varphi)^{c_i}\leftrightarrow({c_i}\to \K^{1.1}_j(\varphi)^{c_i})$\\
1.2-Contextual Knowledge &  $\vdash(\K^{1.2}_{j}\varphi)^{c_i}\leftrightarrow({c_i}\to \K^{1.2}_j(\varphi)^{c_j})$\\
2.1-Contextual Knowledge &  $\vdash(\K^{2.1}_{j}\varphi)^{c_i}\leftrightarrow({c_j}\to \K^{2.1}_j(\varphi)^{c_i})$\\
2.2-Contextual Knowledge &  $\vdash(\K^{2.2}_{j}\varphi)^{c_i}\leftrightarrow({c_j}\to \K^{2.2}_j(\varphi)^{c_j})$\\
\end{tabular}
\end{center}
\end{small}

\subsubsection{Soundness and completeness} As $\mathsf{CEL}$ axioms are reduction axioms, each $\mathsf{CEL}$-formula $\varphi$ can be translated into a standard epistemic formula $\varphi'$ such that for every model $\M$ and world $w$: $\M,w\vDash \varphi$ iff $\M,w\vDash \varphi'$.
Completeness of $\mathsf{CEL}$ axiomatics relative to the set of formulas valid in the
class $\M^{rst}$ of reflexive, symmetric and transitive models is thus inherited from that of usual epistemic logic.

\begin{theorem} For formulas in  
$\mathcal{L}^m_{\textbf{KC}}(\mathcal{A}t)$, \textnormal{\textbf{$\mathsf{CEL}$}}  
is a sound and complete axiomatization w.r.t. $\M^{rst}$. 
\label{completenessCEL}
\end{theorem}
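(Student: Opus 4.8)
The plan is to exploit the fact, already noted in the paragraph preceding the theorem, that the $\mathsf{CEL}$ axioms for context relativization are \emph{reduction axioms}: each of them has the form of an equivalence whose right-hand side pushes the context operator strictly inward (towards the atoms) or eliminates it outright. So I would first define, by recursion on the structure of $\mathsf{CEL}$-formulas, a translation $(\cdot)': \mathcal{L}^m_{\textbf{KC}}(\mathcal{A}t) \to \mathcal{L}^m_{\textbf{K}}(\mathcal{A}t)$ that leaves atoms, Boolean connectives and $\K_j$ untouched on absolute formulas, and on a context-relativized formula uses the corresponding axiom schema to rewrite: $(p)^{c_i} \mapsto (c_i \to p)$, $(\non\varphi)^{c_i} \mapsto (c_i \to \non(\varphi)^{c_i \prime})$, $(\varphi\et\psi)^{c_i} \mapsto ((\varphi)^{c_i\prime} \et (\psi)^{c_i\prime})$, $((\varphi)^{c_k})^{c_i} \mapsto (c_i \to (\varphi)^{c_k\prime})$, and $(\K_j\varphi)^{c_i} \mapsto (c_x \to \K_j(\varphi)^{c_y\prime})$ for the chosen $\tuple{x,y}$. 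Here $c_i$, being a conjunction of literals, already belongs to $\mathcal{L}^m_{\textbf{K}}(\mathcal{A}t)$, so the outputs are genuinely context-free.

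The key point that makes the recursion well-founded is the choice of a suitable complexity measure on $\mathsf{CEL}$-formulas that strictly decreases under each rewrite step; the obvious length measure will not do, since e.g. $(\K_j\varphi)^{c_i}$ is rewritten to something mentioning $(\varphi)^{c_y}$, which may be longer. The standard device (as in the $\mathsf{PAL}$ literature, $\cite{vanBenthemetal06}$) is a weighted measure assigning higher weight to context operators and counting, roughly, the number of symbols in the scope of context operators, so that each axiom application moves a context operator past exactly one connective or removes it; one checks case by case that this measure drops. Once the translation is in place I would prove two lemmas by induction on this measure. \emph{Lemma A (semantic adequacy):} for every model $\M$, world $w$, and $\mathsf{CEL}$-formula $\varphi$, $\M,w \vDash \varphi$ iff $\M,w \vDash \varphi'$ — each inductive step is just the corresponding line of the $\mathsf{CEL}$ Kripke-semantics table read side by side with the definition of $(\cdot)'$. \emph{Lemma B (provable equivalence):} $\vdash_{\mathsf{CEL}} \varphi \leftrightarrow \varphi'$ — each step invokes the matching reduction axiom together with the fact that provable equivalence is a congruence for all connectives of $\mathsf{CEL}$ (which follows from the $\mathsf{S5}$ base plus the Context generalization rule and the normality-style distribution one gets from the axioms).

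With these lemmas the theorem is essentially immediate. \emph{Soundness:} every $\mathsf{EL}$ axiom and rule is sound over $\M^{rst}$ by classical $\mathsf{S5}$ theory, and each added $\mathsf{CEL}$ schema is valid over $\M^{rst}$ by direct inspection of the semantic clauses (the Atoms, Contextual negation, Context iteration and Contextual Knowledge schemas each simply restate a biconditional that holds at every pointed model by the definition of $\vDash$), while Context generalization preserves validity because $\M,w\vDash\varphi$ for all $\M,w$ forces $\M,w\vDash(\varphi)^{c_i}$ via the clauses; hence $\vdash_{\mathsf{CEL}}\varphi$ implies $\M^{rst}\vDash\varphi$. \emph{Completeness:} suppose $\M^{rst}\vDash\varphi$. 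By Lemma A, $\M^{rst}\vDash\varphi'$, and since $\varphi'$ is a formula of $\mathcal{L}^m_{\textbf{K}}(\mathcal{A}t)$, completeness of multi-modal $\mathsf{S5}$ (i.e. of $\mathsf{EL}$) relative to $\M^{rst}$ gives $\vdash_{\mathsf{EL}}\varphi'$, hence $\vdash_{\mathsf{CEL}}\varphi'$; then Lemma B yields $\vdash_{\mathsf{CEL}}\varphi$.

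The main obstacle I anticipate is not any single hard argument but the bookkeeping around the complexity measure: one must design it so that it simultaneously dominates all five reduction schemas — in particular the Contextual Knowledge schema in all four $\tuple{x,y}$ variants, where a context operator is duplicated onto a subformula of a different index — and so that it is insensitive to the identity of the context index. A secondary subtlety is Lemma B: establishing that provable equivalence is a full congruence requires a small auxiliary induction showing $\vdash_{\mathsf{CEL}} \varphi\leftrightarrow\psi$ implies $\vdash_{\mathsf{CEL}} (\varphi)^{c_i}\leftrightarrow(\psi)^{c_i}$ and $\vdash_{\mathsf{CEL}} \K_j\varphi\leftrightarrow\K_j\psi$, the former being handled by Context generalization applied to $\varphi\leftrightarrow\psi$ together with the Contextual conjunction/negation axioms, the latter by the standard $\mathsf{S5}$ replacement-of-equivalents theorem. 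Everything else is routine.
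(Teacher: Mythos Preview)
Your proposal is correct and follows essentially the same reduction strategy as the paper's own proof: translate each $\mathsf{CEL}$-formula to an equivalent $\mathsf{EL}$-formula by iterating the reduction axioms, then inherit soundness and completeness from multi-modal $\mathsf{S5}$. If anything, you are more careful than the paper, which simply asserts the existence of the finite reduction chain $\tuple{\varphi_0,\ldots,\varphi_n}$ without discussing termination; your complexity-measure and congruence remarks fill gaps the paper leaves implicit.
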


\begin{proof} From any $\mathsf{CEL}$ formula $\varphi\in\mathcal{L}^m_{\textbf{KC}}(\mathcal{A}t)$, one can build a tuple \tuple{\varphi_0,\ldots,\varphi_n} of $\mathsf{CEL}$ formulas and reach a formula $\varphi'\in\mathcal{L}^m_{\textbf{K}}(\mathcal{A}t)$ such that: $\varphi_0=\varphi$, $\varphi_n=\varphi'$, and every formula $(\varphi_m \leftrightarrow\varphi_{m+1})$ ($0\leq m\leq n-1$) is an instantiation of an axiom schema. 
As a consequence:\begin{quote}$\vdash_\mathsf{CEL} \varphi$ 
iff $\vdash_{\mathsf{CEL}}  \varphi_1$
iff \ldots
iff $\vdash_{\mathsf{CEL}}  \varphi_{n-1}$
iff $\vdash_{\mathsf{CEL}}  \varphi'$\end{quote}
Since $\varphi'\in\mathcal{L}^m_{\textbf{K}}(\mathcal{A}t)$,  $\vdash_{\mathsf{CEL}}  \varphi'$ is equivalent to $\vdash_{\mathsf{EL}}\varphi'$. Now, $\mathsf{EL}$ is sound and complete, so:
\begin{quote}$\vdash_{\mathsf{CEL}}  \varphi'$ iff $\M^{rst}\vDash\varphi'$ \tab (i.e. iff $\varphi'$ is valid w.r.t. $\M^{rst}$)\end{quote}
Hence in order to prove soundness and completeness of $\mathsf{CEL}$, it suffices to prove that for each reduction axiom schema: $\vdash_{\mathsf{CEL}}\psi\leftrightarrow\psi'$, one gets: $\M^{rst}\vDash\psi$ iff $\M^{rst}\vDash\psi'$. This obtains immediately by virtue of the definitions.
\end{proof}



\section{Dialogical $\mathsf{CEL}$ \label{dialcel}}

In this section, we present a dialogical version of $\mathsf{CEL}$. We first introduce dialogical logic for (multi-)S5, then we extend it to context-relativized formulas.


\subsection{Dialogical (multi-)modal logic}

In a dialogical game, two players argue about a thesis (a formula): The proponent \textbf{P} defends it against the attacks of the opponent \textbf{O}. 
For any set of game rules $\mathsf{Dial\Sigma}$ associated with some logical theory $\Sigma$, we will use the notation $\mathsf{Dial\Sigma}\VDash \varphi$ to say that there is a winning strategy for the proponent in the dialogical game $G_\Sigma(\varphi)$, 
i.e. if playing according the rules of $\mathsf{Dial\Sigma}$, she can defend the formula $\varphi$ against any attack from the opponent -- owning a winning strategy for a game enables a player to win any play of the game. 
The game rules are defined such that a formula $\varphi$ is valid or logically true in $\Sigma$ ($\vDash_\Sigma \varphi$) if and only if $\mathsf{Dial\Sigma}\VDash \varphi$.

The rules belong to two categories: particle rules and structural rules. In the remainder of the section, we just give and briefly explain the rules for games reaching multi-S5 valid formulas.\footnote{~Dialogical modal logics were first introduced in Rahman \& Rückert's paper \cite{RahmanRuckert99}. Readers interested by some more detailed account of both non-modal and modal logics should refer to that paper. For a presentation of game rules close to the present one, see \cite{Rebuschi05}.}\\

\subsubsection{Worlds numbering}

The thesis of the dialogue is uttered at a given world $w$, as well as the subsequent formulas. This world relativization is explicit in dialogue games: we will use labelled formulas of the kind ``$w:\varphi$'', like in explicit tableau systems. 
For that purpose, we need a system of world numbering that reflects syntactically the accessibily relation. We will use the following principles, inspired from Fitting numbers for mono-modal logic:
\begin{small}
\begin{itemize}
	\item The initial world is numbered 1. The $n$ immediate successors of $w$ according to the agent $j$
	are numbered $wj1, wj2, \ldots, wjn$.
	\item An immediate successor $wju$ of a world $w$ is said to be of rank $+1$ relative to $w$, and $w$ is said to be of rank $-1$ relative to its immediate successors. A successor $wjujv$ of a world $w$ is said to be of rank $+2$ relative to $w$, etc. 
\end{itemize}
\end{small}
So for instance, a play on a thesis ``$1:\K_i\K_j\varphi$'' can reach the following labelled formula: ``$1i1j2:\varphi$.''\\

\subsubsection{Particle Rules} The meaning of each logical constant is given through a particle rule which determines how to attack and defend a formula whose main connective is the constant in question. The set, {\small $\mathbf{EL^mPartRules}$}, of particle rules for disjunction, conjunction, subjunction, negation, and epistemic operators is recapitulated in the following table:


\begin{small}
\begin{center}
		\begin{tabular}{||c|c|c||}
			\hline\hline
			\hspace{3cm} & \hspace{3cm} & \hspace{3cm} \\
			 & \textbf{Attack} & \textbf{Defence} \\
 			\hline\hline
 			$w:\,\varphi\ou\psi$ & $w:\,?$ & $w:\varphi$, or $w:\psi$ \\
 			 &  & \footnotesize{(The defender chooses)} \\
 			\hline
 			$w:\,\varphi\et\psi$ & $w:\,?_L$, or $w:\,?_R$ & $w:\varphi$, or $w:\psi$ \\
 			 &  \footnotesize{(The attacker chooses)} & {\footnotesize(respectively)}\\
 			 \hline
 			$w:\,\varphi\to\psi$ & $w:\,\varphi$ & $w:\,\psi$ \\
 			\hline
 			$w:\,\non\varphi$ & $w:\,\varphi$ & $\otimes$ \\
 			&  & \footnotesize{(No possible defence)} \\
 			\hline\hline
 			$w:\,\K_i \varphi$ & $w:\,?_{\K_i/wiw'}$ & $wiw':\,\varphi$ \\
 			&  \footnotesize{(the attacker chooses} &  \\
 			& {\footnotesize an available world $wiw'$)} & \\
 			\hline
 			$w:\,\p_i \varphi$ & $w:\,?_{\p_i}$ & $wiw':\,\varphi$ \\
 			&  & \footnotesize{(the available world $wiw'$} \\
 			& &{\footnotesize being chosen by the defender)} \\
 			\hline\hline
		\end{tabular}
\end{center}
\end{small}

The idea for disjunction is that the proposition $\varphi \vee \psi$, when asserted (at world $w$) by a player, is challenged by the question ``\textit{Which one?}''; the defender has then to choose one of the disjuncts and to defend it against any new attack. The rule is the same for the conjunction $\varphi\wedge \psi$, except that the choice is now made by the attacker: ``\textit{Give me the left conjunct} ($?_L$)'' or ``\textit{Give me the right one }($?_R$)'', and the defender has to assume the conjunct chosen by his or her challenger. For the conditional $\varphi\to \psi$, the attacker assumes the antecedent $\varphi$ and the defender continues with $\psi$. Negated formulas are attacked by the cancellation of negation, and cannot be defended; the defender in this case can thus only counterattack (if she can).

The particle rules for each epistemic operator $\K_i$ and its dual $\p_i$ ($i\in\{1,\ldots,m\}$) enable the players to change the world. They are defined in a way analogous to conjunction and disjunction respectively, regarding the player (challenger or defender) expected to make the relevant choice.\\

\subsubsection{Structural Rules}
In addition to the particle rules connected to each logical constant, one also needs structural rules to be able to play in such and such a way at the level of the whole game. The first five of them yield games for classical propositional logic, and the last two rules regiment
the modal part of epistemic logic.

\begin{center}
\fbox{
\begin{minipage}{0.9\textwidth}
\begin{center}

\begin{small}

\begin{itemize}

\vspace{0,1cm}

\item (PL-0) \textbf{Starting Rule:} The initial formula (the \textit{thesis} of the dialogical game) is asserted by \textbf{P} at world 1. Moves are numbered and alternatively uttered by \textbf{P} and \textbf{O}.  Each move after the initial utterance is either an attack or a defence.

	\item (PL-1) \textbf{Winning Rule:} Player \textbf{X} wins iff it is \textbf{Y}'s turn to play and \textbf{Y} cannot perform any move.

	\item (PL-2) \textbf{No Delaying Tactics Rule:} Both players can only perform moves that change the situation.

\item (PL-3) \textbf{Formal Rule for Atoms:} \textit{At a given world} \textbf{P} cannot introduce any new atomic formula; new atomic formulas must be stated by \textbf{O} first. Atomic formulas can never be attacked.

	\item (PL-4c) \textbf{Classical Rule:} In any move, each player may attack a complex formula uttered by the other player or defend him/herself against \textit{any attack} (including those that have already been defended).\footnote{This rule can be replaced by the following one to get games for intuitionistic logic:\\ \textbf{Intuitionistic Rule:} In any move, each player may attack a complex formula uttered by the other player or defend him/herself against \textit{the last attack that has not yet been defended}.}

\item[] A world $w$ is said to be \textit{introduced} by a move in a play when $w$ is first mentioned either through an asserted labeled formula ($w:\varphi$), or through a non-assertive attack ($?_{K_i/w}$).

\item (ML-frw) \textbf{Formal Rule for Worlds:} 
\textbf{P} \textit{cannot introduce} a new world; new worlds must be introduced by \textbf{O} first.

\item (ML-S5) \textbf{S5 Rule:} \textbf{P} can choose any (given) world.\footnote{Other structural rules could define other usual modal systems (K, D, T, S4, etc.). See \cite{RahmanRuckert99}.}

\end{itemize}
\end{small}

\end{center}
\end{minipage}
}
\end{center}

Now we can build the set of rules for multi-S5 epistemic logic ($\mathsf{EL}$):
\begin{center}
{\small $\mathsf{DialEL}$} $:=$ {\small $\mathbf{EL^mPartRules}$}\\$\cup$ \{PL-0, PL-1, PL-2, PL-3, PL-4c, ML-frc, ML-S5\}\\
\end{center}

It is assumed that this dialogical system is sound and complete, i.e.:
\begin{center}
	$\mathsf{DialEL}\VDash\varphi$ iff $\vDash_\mathsf{EL}\varphi$.
\end{center}
This is shown using strategic tableaus analogous which are similar to usual semantic tableaus, after a reinterpretation of the players' roles.\footnote{The proofs of soundness and completeness of Dialogical $\EL$ are non-trivial. They are not explicitly given in \cite{RahmanRuckert99}, even though a halfway point is reached there.}\\

\subsubsection{Examples}


\begin{example} Let us consider a substitution instance of the Positive Introspection Property (also known as Axiom \textbf{4}):
$\K_i \phi \to \K_i \K_i \phi$. As our dialogical rules correspond to $\mathsf{S5}$, the proponent is expected to have a winning strategy in the corresponding game.

\begin{small}
\begin{center}
		\begin{tabular}{||l rl r||l rl r||}
			\hline\hline
			\hspace{0.1cm} && \hspace{1,5cm} & \hspace{0.2cm} & \hspace{0.2cm} && \hspace{1,5cm} & \hspace{0.1cm} \\
			 && \textbf{O} & & & \textbf{P} &&  \\
 			\hline\hline
 			 && & & &  $1:$&$\K_i a \to \K_i \K_i a$ & (0) \\
 			\hline
 			 (1) & $1:$&$\K_i a$ & 0 & & $1:$&$\K_i \K_i a$ & (2)\\
 			\hline
 			(3) & $1:$&$?_{\K_i/1i1}$ & 2 & & $1i1:$&$\K_i a$ & (4)\\
 			\hline
 			(5) & $1i1:$&$\,?_{\K_i/1i1i1}$ & 4 & & $1i1i1:$&$a$ & (8)\\
 			\hline
 			(7) & $1i1i1:$&$a$ & & 1 & 1: &$\,?_{\K_i/1i1i1}$ & (6)
 			\\
 			\hline\hline
		\end{tabular}
\\\tab\\\textbf{P} wins the play
\end{center}
\end{small}
The numerals within brackets in the external column indicate the moves and the corresponding arguments (here from (0) to (8)); when a move is an attack, the internal array indicates the argument which is under attack; the corresponding defence is written on the same line, even if the move is made later in the play.

Let us comment this particular play in detail. It starts at move (0) with the utterance of the thesis by \textbf{P} at world 1 (PL-0). The formula is challenged by \textbf{O} at move (1), using the particle rule for implication; at move (2) \textbf{P} immediately defends his initial argument. \textbf{O} then attacks the epistemic formula at move (3), and using the correlated particle rule as well as (ML-frw), he introduces a new world, $1i1$: in dialogical games the opponent is considered as using the best available strategy; he thus jumps from one world to another as much as possible, to prevent the proponent to use his concessions (atomic utterances) at a given world.
At move (4) \textbf{P} defends her formula using (ML-S5). Then in (5), \textbf{O} attacks the epistemic operator introducing once more a new world, $1i1i1$. Now the proponent cannot immediately defend her utterance, because it would lead her to utter an atomic formula ($a$) which has not been previously introduced by \textbf{O} at $1i1i1$. At move (6), \textbf{P} thus counterattacks (1) using the new world introduced by \textbf{O}, asking him to utter $a$ at this world (ML-S5); in (7) \textbf{O} defends himself uttering $a$ at $1i1i1$: the atomic formula is now available for \textbf{P}, who can win the play at move (8). As the opponent could not play better -- actually, he could not play differently than he did in this play --, this play shows that there is a winning strategy for \textbf{P} in the game.{\hspace*{\fill} 
$\blacksquare$}
\end{example}

\begin{example} Now we consider a formula with two epistemic operators:
$\K_i \K_j a \to (\K_i a \et\K_j a)$. Here it is not enough to consider one play: after move (2), the opponent can choose either the left or the right conjunct. Depending on this choice, the remainder of the play will not be the same. So after checking that the proponent has a winning strategy in plays where \textbf{O} chooses the left conjunct, one cannot conclude that she has a winning strategy at all: it must be verified that she can also systematically win against \textbf{O} when he chooses the right conjunct.

\begin{small}
\begin{center}
		\begin{tabular}{||l rl r||l rl r||}
			\hline\hline
			\hspace{0.1cm} && \hspace{1,5cm} & \hspace{0.2cm} & \hspace{0.2cm} && \hspace{1,5cm} & \hspace{0.1cm} \\
			 && \textbf{O} & & & \textbf{P} &&  \\
 			\hline\hline
 			 && & & &  $1:$&$\K_i \K_j a \to (\K_i a \et\K_j a)$ & (0) \\
 			\hline
 			 (1) & $1:$&$\K_i \K_j a$ & 0 & & $1:$&$\K_i a \et\K_j a$ & (2)\\
 			 \hline
 			 (3) & $1:$&$?_L$ & 2 & & $1:$&$\K_i a$ & (4)\\
 			 \hline
 			 (5) & $1:$&$?_{K_i/1i1}$ & 4 & & $1i1:$&$a$ & (10)\\
 			 \hline
 			 (7) & $1i1:$&$K_j a$ & & 1 & $1:$&$?_{K_i/1i1}$ & (6)\\
 			 \hline
 			 (9) & $1i1:$&$a$ & & 7 & $1i1:$&$?_{K_j/1i1}$ & (8)\\
 			\hline\hline
		\end{tabular}
\\\tab\\\textbf{P} wins the play
\end{center}
\end{small}

\begin{small}
\begin{center}
		\begin{tabular}{||l rl r||l rl r||}
			\hline\hline
			\hspace{0.1cm} && \hspace{1,5cm} & \hspace{0.2cm} & \hspace{0.2cm} && \hspace{1,5cm} & \hspace{0.1cm} \\
			 && \textbf{O} & & & \textbf{P} &&  \\
 			\hline\hline
 			 && & & &  $1:$&$\K_i \K_j a \to (\K_i a \et\K_j a)$ & (0) \\
 			\hline
 			 (1) & $1:$&$\K_i \K_j a$ & 0 & & $1:$&$\K_i a \et\K_j a$ & (2)\\
 			 \hline
 			 (3*) & $1:$&$?_R$ & 2 & & $1:$&$\K_j a$ & (4*)\\
 			 \hline
 			 (5*) & $1:$&$?_{K_j/1j1}$ & 4 & & $1j1:$&$a$ & (10*)\\
 			 \hline
 			 (7*) & $1:$&$K_j a$ & & 1 & $1:$&$?_{K_i/1}$ & (6*)\\
 			 \hline
 			 (9*) & $1j1:$&$a$ & & 7 & $1:$&$?_{K_j/1j1}$ & (8*)\\
 			\hline\hline
		\end{tabular}
\\\tab\\\textbf{P} wins the play
\end{center}
\end{small}
As expected, there is a winning strategy for the proponent in each case. The formula is thus proved $\mathsf{EL}$ valid.{\hspace*{\fill} 
$\blacksquare$}
\end{example}


\subsection{Adding contextual relativization}

\subsubsection{Particle rules}

The table below gives the particle rules for context-relativized formulas: the rules follow the reduction axioms in a natural way. This new set of particle rules will be referred to as {\small $\mathbf{CEL^mPartRules}$}.

\begin{small}
\begin{center}
		\begin{tabular}{||c|c|c||}
			\hline\hline
			\hspace{3cm} & \hspace{3cm} & \hspace{3cm} \\
			 & \textbf{Attack} & \textbf{Defence} \\
 			\hline
 			$w:\,(p)^{c_i}$ & $w:\,c_i$ & $w:\,p$\\
 			 \small{($p$ being an atom)}&  &  \\
 			\hline
			$w:\,((\varphi)^{c_k})^{c_i}$ & $w:\,c_i$ & $w:\,(\varphi)^{c_k}$\\
 			\hline
 			$w:\,(\varphi\ou\psi)^{c_i}$ & $w:\,c_i$ & $w:\,(\varphi)^{c_i}\ou(\psi)^{c_i}$ \\
 			  			\hline
 			$w:\,(\varphi\et\psi)^{c_i}$ & $w:\,?_L$, or $w:\,?_R$ & $w:\,(\varphi)^{c_i}$, or $w:\,(\psi)^{c_i}$ \\
 			 &  \small{(The attacker chooses)} & {\small(respectively)}\\
 			\hline
 			$w:\,(\varphi\to\psi)^{c_i}$ & $w:\,c_i$ & $w:\,(\varphi)^{c_i}\to(\psi)^{c_i}$ \\
 				\hline
 			$w:\,(\non\varphi)^{c_i}$ & $w:\,c_i$ & $w:\,\non(\varphi)^{c_i}$ \\
 				\hline
 			$w:\,(\K_j\varphi)^{c_i}$ & $w:\,c_x$ & $w:\,\K_j(\varphi)^{c_y}$ \\
 			\hline\hline
		\end{tabular}
\end{center}
\end{small}
Here again, we can give a more explicit version of the last particle rule depending on the kind of the epistemic operator:

\begin{small}
\begin{center}
		\begin{tabular}{||c|c|c||}
			\hline\hline
			\hspace{3cm} & \hspace{3cm} & \hspace{3cm} \\
			 & \textbf{Attack} & \textbf{Defence} \\
 			\hline
 			$w:\,(\K^{1.1}_j\varphi)^{c_i}$ & $w:\,c_i$ & $w:\,\K^{1.1}_j(\varphi)^{c_i}$ \\
 			\hline
 			$w:\,(\K^{1.2}_j\varphi)^{c_i}$ & $w:\,c_i$ & $w:\,\K^{1.2}_j(\varphi)^{c_j}$ \\
 			\hline
 			$w:\,(\K^{2.1}_j\varphi)^{c_i}$ & $w:\,c_j$ & $w:\,\K^{2.1}_j(\varphi)^{c_i}$ \\
 			\hline
 			$w:\,(\K^{2.2}_j\varphi)^{c_i}$ & $w:\,c_j$ & $w:\,\K^{2.2}_j(\varphi)^{c_j}$ \\
 			\hline\hline
		\end{tabular}
\end{center}
\end{small}

\subsubsection{Structural Rule}

There is only one structural rule to add to the listing of ML: the rule that states which player can introduce a context $c_i$, by asserting its characteristic formula ${c_i}$. The idea is that a context should not be assumed in any formal proof.
\begin{center}
\fbox{
\begin{minipage}{0.9\textwidth}
\begin{center}
\begin{small}
\begin{itemize}

\item (ML-frc) \textbf{Formal Rule for contexts:} \textbf{P} \textit{cannot introduce} a new context $c$ in a given world $w$ by playing $w:\,{c}\,$; new contexts must be introduced by \textbf{O} first.

\end{itemize}
\end{small}
\end{center}
\end{minipage}
}
\end{center}
The intuition behind such a rule is that a context can be any formula, including atomic ones; so the proponent should not have powers regarding contexts she does not already have for atoms.

Now we have the set of rules for $\mathsf{CEL}$:
\begin{center}
{\small $\mathsf{DialCEL}$} $:=$ {\small $\mathsf{DialEL}$} $\cup$ {\small $\mathbf{CEL^mPartRules}$} $\cup$ \{ML-frc\}\\
\end{center}
Soundness and completeness will be handled after the following  examples.\\

\subsubsection{Examples}

In the following two tables, we consider the validity of a contextually modified version of positive introspection, where the consequent is evaluated relative to another context than the antecedent. As easily appears through the games, the upshot depends on the chosen position: the formula is valid according to (2.2), but not valid according to (1.2).

\begin{example}
\label{ex4}
Using explicit exponents for operators, the formula to be played is the following one: 
$(\K_i^{1.2} a)^{c_i} \to (\K_i^{1.2} \K_i^{1.2} a)^{c_j}$.

\begin{small}
\begin{center}
		\begin{tabular}{||l rl r||l rl r||}
			\hline\hline
			\hspace{0.1cm} && \hspace{1,5cm} & \hspace{0.2cm} & \hspace{0.2cm} && \hspace{1,5cm} & \hspace{0.1cm} \\
			\textbf{1.2} && \textbf{O} & & & \textbf{P} &&  \\
 			\hline\hline
 			 && & & &  $1:$&$(\K_i a)^{c_i} \to (\K_i \K_i a)^{c_j}$ & (0) \\
 			\hline
 			 (1) & $1:$&$(\K_i a)^{c_i}$ & 0 & & $1:$&$(\K_i \K_i a)^{c_j}$ & (2)\\
 			\hline
 			(3) & $1:$&$c_j$ & 2 & & $1:$&$\K_i (\K_i a)^{c_i}$ & (4)\\
 			\hline
 			(5) & $1:$&$?_{\K_i/1i1}$ & 4 & & $1i1:$&$(\K_i a)^{c_i}$ & (6)\\
 			\hline
 			(7) & $1i1:$&$c_i$ & 6 & & $1i1:$&$\K_i (a)^{c_i}$ & (8)\\
 			\hline
 			(9) & $1i1:$&$\,?_{\K/1i1i1}$ & 8 & & $1i1i1:$&$(a)^{c_i}$ & (10)\\
 			\hline
 			(11) & $1i1i1:$&$c_i$ & 10 & & &&
 			\\			
 			\hline\hline
		\end{tabular}
\\\tab\\\textbf{O} wins the play
\end{center}
\end{small}
After move (11), \textbf{P} cannot answer $a$ for it has not been yet introduced by \textbf{O} at world $1i1i1$. The only possible solution for \textbf{P} would be to attack (1) to force \textbf{O} to utter $\K_i a$ at world $1$, then to force him to utter $a$ at $1i1i1$. But she cannot, since the opponent never introduced $c_i$ at world $1$. So she loses the play.{\hspace*{\fill} $\blacksquare$}

\end{example}

\begin{example}\label{ex5}
Let us now consider the (2.2) version of the same formula:\\ 
$(\K_i^{2.2} a)^{c_i} \to (\K_i^{2.2} \K_i^{2.2} a)^{c_j}$.

\begin{small}
\begin{center}
		\begin{tabular}{||l rl r||l rl r||}
			\hline\hline
			\hspace{0.1cm} && \hspace{1,5cm} & \hspace{0.2cm} & \hspace{0.2cm} && \hspace{1,5cm} & \hspace{0.1cm} \\
				\textbf{2.2}		 && \textbf{O} & & & \textbf{P} &&  \\
 			\hline\hline
 			 && & & &  $1:$&$(\K_i a)^{c_i} \to (\K_i \K_i a)^{c_j}$ & (0) \\
 			\hline
 			 (1) & $1:$&$(\K_i a)^{c_i}$ & 0 & & $1:$&$(\K_i \K_i a)^{c_j}$ & (2)\\
 			\hline
 			\textbf{(3)} & $1:$&$c_i$ & 2 & & $1:$&$\K_i (\K_i a)^{c_i}$ & (4)\\
 			\hline
 			(5) & $1:$&$?_{\K_i/1i1}$ & 4 & & $1i1:$&$(\K_i a)^{c_i}$ & (6)\\
 			\hline
 			(7) & $1i1:$&$c_i$ & 6 & & $1i1:$&$\K_i (a)^{c_i}$ & (8)\\
 			\hline
 			(9) & $1i1:$&$?_{\K_i/1i1i1}$ & 8 & & $1i1i1:$&$(a)^{c_i}$ & (10)\\
 			\hline
 			(11) & $1i1i1:$&$c_i$ & 10 & & $1i1i1:$&$a$&(18)
 			\\
 			\hline
 			(13) & $1:$&$\K_i(a)^{c_i}$ &  & 1 & $1:$&$c_i$& (12)\\
 			\hline
 			(15) & $1i1i1:$&$(a)^{c_i}$ &  & 13 & $1:$&$?_{\K_i/1i1i1}$ & (14)\\
 			\hline
 			(17) & $1i1i1:$&$a$ &  & 15 & $1i1i1:$&$c_i$ \tab\tab  & (16)\\
 			
 			\hline\hline
		\end{tabular}
\\\tab\\\textbf{P} wins the play
\end{center}
\end{small}
Here the proponent has a winning strategy, thanks to the utterance of $c_i$ by \textbf{O} at world $1$ in the third move.
{\hspace*{\fill} $\blacksquare$}
\end{example}

\subsubsection{Soundness and completeness}

\begin{theorem} 
Assuming that 
\textnormal{\textbf{$\mathsf{DialEL}$}} is sound and complete w.r.t. $\M^{rst}$, \textnormal{\textbf{$\mathsf{DialCEL}$}}  
defines a \emph{sound} and \emph{complete} dialogics w.r.t. $\M^{rst}$, i.e. 
for every $\mathsf{CEL}$ formula $\varphi\in\mathcal{L}^m_{\textbf{KC}}(\mathcal{A}t)$, the following equivalence holds:\tab 
$\mathsf{DialCEL}\VDash\varphi$ iff $\M^{rst}\vDash\varphi$.
\end{theorem}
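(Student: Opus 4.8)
The plan is to reduce the soundness and completeness of $\mathsf{DialCEL}$ to that of $\mathsf{DialEL}$, exactly paralleling the proof of Theorem \ref{completenessCEL}, but now on the game-theoretic side rather than the proof-theoretic one. First I would observe that the $\mathsf{CEL}$ particle rules in $\mathbf{CEL^mPartRules}$ are designed to mirror the reduction axioms: attacking and defending a context-relativized formula $(\varphi)^{c_i}$ forces a transition to a strictly syntactically simpler formula (in the sense that the outermost context relativization is either pushed inward past a connective, turned into a conditional $c_i \to \ldots$, or — in the atomic and iterated cases — resolved into a subdialogue about $c_i$ together with the stripped formula). So I would first make precise a well-founded complexity measure $\mu$ on $\mathcal{L}^m_{\textbf{KC}}(\mathcal{A}t)$ under which every $\mathsf{CEL}$ particle rule replaces a formula by formulas of strictly smaller $\mu$-value; the natural choice counts context-relativization operators with a weight dominating that of the epistemic and Boolean connectives they may be pushed past.

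Next I would relate a $\mathsf{DialCEL}$ game on a formula $\varphi$ to a $\mathsf{DialEL}$ game on its translation $\varphi'$ (the standard epistemic formula produced by the reduction procedure of Theorem \ref{completenessCEL}). The key lemma is that a winning strategy for $\mathbf{P}$ in $G_{\mathsf{CEL}}(\varphi)$ exists iff one exists in $G_{\mathsf{EL}}(\varphi')$. To prove this I would argue by induction on $\mu(\varphi)$: whenever the thesis (or any subformula reached in play) has the form $(\psi)^{c_i}$, the only moves available to attacker and defender are exactly those licensed by the matching particle rule, and these moves realize, step by step, precisely the logical content of the corresponding reduction-axiom biconditional — e.g. the rule for $(\non\varphi)^{c_i}$ (attack $c_i$, defend $\non(\varphi)^{c_i}$) simulates a game on $c_i \to \non(\varphi)^{c_i}$, and likewise for the four variants of Contextual Knowledge, where the superscript $\tuple{x,y}$ tells us which context label the attack and the continuation use. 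One then checks that the structural rule (ML-frc) is the game-theoretic counterpart of Context Generalization: it guarantees $\mathbf{P}$ cannot unilaterally introduce a context formula, which matches the fact that $c_i$ is treated syntactically like an atom, so the Formal Rule for Atoms (PL-3) already handles consistency. Since $\mathsf{DialEL}$ is assumed sound and complete, $\mathsf{DialEL} \VDash \varphi'$ iff $\M^{rst} \vDash \varphi'$, and by Theorem \ref{completenessCEL} $\M^{rst}\vDash\varphi'$ iff $\M^{rst}\vDash\varphi$; chaining these equivalences with the key lemma yields $\mathsf{DialCEL}\VDash\varphi$ iff $\M^{rst}\vDash\varphi$.

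The main obstacle I expect is the key lemma's induction: one must verify that the simulation between $\mathsf{DialCEL}$ plays on $(\psi)^{c_i}$ and $\mathsf{DialEL}$ plays on the reduced formula is faithful \emph{at the level of strategies}, not merely individual plays. Concretely, when $\mathbf{O}$ chooses the left or right conjunct of $(\varphi\et\psi)^{c_i}$, or chooses a world in attacking $(\K_j\varphi)^{c_i}$, the induced branching in the $\mathsf{DialCEL}$ game-tree must correspond exactly to the branching in the $\mathsf{DialEL}$ game-tree on the translation, and this requires care about the interaction between context relativization and world-introduction (the Formal Rule for Worlds) — in particular in cases (1.2), (2.1), (2.2) where the context label carried along the play differs from the one used in the contextual condition. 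The cleanest route is to prove the correspondence by exhibiting an explicit order-preserving map between positions of the two game-trees that commutes with the particle and structural rules, and to note that (ML-frc) plus (PL-3) ensure the map respects the Formal Rules on both sides. Everything else — the propositional and epistemic particle rules, the S5 rule, the winning and no-delay rules — is inherited verbatim from $\mathsf{DialEL}$, so no new argument is needed there.
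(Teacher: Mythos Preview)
Your proposal is correct in its overall architecture and shares with the paper the reduction of $\mathsf{DialCEL}$ to $\mathsf{DialEL}$ via the translation $\varphi\mapsto\varphi'$ of Theorem~\ref{completenessCEL}. The route you take to the key step, however, differs from the paper's. You propose to establish directly that $\mathsf{DialCEL}\VDash\varphi$ iff $\mathsf{DialEL}\VDash\varphi'$ by building, via induction on a complexity measure $\mu$, an order-preserving correspondence between positions of the two game trees that transports winning strategies back and forth. The paper instead proceeds more locally: it argues that it suffices to show, for each reduction-axiom schema $\psi\leftrightarrow\psi'$, that $\mathsf{DialCEL}\VDash\psi\to\psi'$ and $\mathsf{DialCEL}\VDash\psi'\to\psi$, and then exhibits explicit plays (worked out in full for the Negation axiom, with the remaining cases left ``similarly'') in which $\mathbf{P}$ wins. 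The paper's approach is quicker to write down and stays close to the dialogical practice of displaying concrete winning plays; yours is more structural and makes explicit exactly the point the paper leaves implicit, namely that validity of the biconditionals must propagate through arbitrary formula contexts at the level of \emph{strategies}---the very ``obstacle'' you flag. In that sense your argument is more careful about the replacement step, at the cost of requiring the game-tree morphism to be spelled out, whereas the paper trades that for a short case-by-case verification that tacitly relies on a congruence principle for $\VDash$.
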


The proof is analogous to that of Theorem \ref{completenessCEL}: we use the reduction axioms to translate $\mathsf{CEL}$ formulas into $\mathsf{EL}$ formulas, and assuming that dialogical logic  $\mathsf{DialEL}$ is sound and complete for $\mathsf{EL}$, we concentrate on the 
axiom schemas of $\mathsf{CEL}$.

\begin{proof} From any $\mathsf{CEL}$ formula $\varphi\in\mathcal{L}^m_{\textbf{KC}}(\mathcal{A}t)$, one can build a tuple \tuple{\varphi_0,\ldots,\varphi_n} of $\mathsf{CEL}$ formulas and reach a formula $\varphi'\in\mathcal{L}^m_{\textbf{K}}(\mathcal{A}t)$ such that: $\varphi_0=\varphi$, $\varphi_n=\varphi'$, and every formula $(\varphi_m \leftrightarrow\varphi_{m+1})$ ($0\leq m\leq n-1$) is an instantiation of an axiom schema. 
As a consequence:
\begin{quote}$\vdash_\mathsf{CEL} \varphi$ 
iff $\vdash_{\mathsf{CEL}}  \varphi_1$
iff \ldots
iff $\vdash_{\mathsf{CEL}}  \varphi_{n-1}$
iff $\vdash_{\mathsf{CEL}}  \varphi'$\end{quote}
According to Theorem \ref{completenessCEL}, $\mathsf{CEL}$ is complete so:
\begin{quote}$\M^{rst}\vDash \varphi$ 
iff $\M^{rst}\vDash  \varphi_1$
iff \ldots
iff $\M^{rst}\vDash  \varphi_{n-1}$
iff $\M^{rst}\vDash  \varphi'$\end{quote}
Since $\varphi'\in\mathcal{L}^m_{\textbf{K}}(\mathcal{A}t)$, as $\mathsf{DialEL}$ is sound and complete, we have:
\begin{quote}$\mathsf{DialCEL}\VDash  \varphi'$ iff
$\mathsf{DialEL}\VDash  \varphi'$ iff $\M^{rst}\vDash\varphi'$.\end{quote}
Hence in order to prove soundness and completeness of $\mathsf{DialCEL}$, it suffices to prove that for each reduction axiom schema: $\vdash_{\mathsf{CEL}}\psi\leftrightarrow\psi'$, the following holds: $\mathsf{DialCEL}\VDash\psi\to\psi'$ and $\mathsf{DialCEL}\VDash\psi'\to\psi$.\\

Let us consider the axiom for Negation. We thus have two (kinds of) games.

\begin{small}
\begin{center}
		\begin{tabular}{||l rl r||l rl r||}
			\hline\hline
			\hspace{0.1cm} && \hspace{1,5cm} & \hspace{0.2cm} & \hspace{0.2cm} && \hspace{1,5cm} & \hspace{0.1cm} \\
			 && \textbf{O} & & & \textbf{P} &&  \\
 			\hline\hline
 			 && & & &  $1:$&$(\non\varphi)^{c_i} \to (c_i \to\non(\varphi)^{c_i})$ & (0) \\
 			\hline
 			 (1) & $1:$&$(\non\varphi)^{c_i}$ & 0 & & $1:$&$c_i \to\non(\varphi)^{c_i}$ & (2)\\
 			\hline
 			(3) & $1:$&$c_i$ & 2 & & $1:$&$\non(\varphi)^{c_i}$ & (4)\\
 			\hline
 			(5) & $1:$&$(\varphi)^{c_i}$ & 4 & & & $\otimes$& \\
 			\hline
 			(7) & $1:$&$\varphi$ & &5 & $1:$&$c_i $ & (6)\\
 			\hline
 			(9) & $1:$&$\non\varphi$ &  & 1& $1:$&$c_i$ & (8)\\
 			\hline
 			& &$\otimes$ &  & 9 & $1:$&$\varphi$& (10)
 			\\
 			\hline\hline
		\end{tabular}
\end{center}
\end{small}

\begin{small}
\begin{center}
		\begin{tabular}{||l rl r||l rl r||}
			\hline\hline
			\hspace{0.1cm} && \hspace{1,5cm} & \hspace{0.2cm} & \hspace{0.2cm} && \hspace{1,5cm} & \hspace{0.1cm} \\
			 && \textbf{O} & & & \textbf{P} &&  \\
 			\hline\hline
 			 && & & &  $1:$&$(c_i \to\non(\varphi)^{c_i})\to(\non\varphi)^{c_i}$ & (0) \\
 			\hline
 			 (1) & $1:$&$c_i \to\non(\varphi)^{c_i}$ & 0 & & $1:$&$(\non\varphi)^{c_i}$ & (2)\\
 			\hline
 			(3) & $1:$&$c_i$ & 2 & & $1:$&$\non\varphi$ & (4)\\
 			\hline
 			(5) & $1:$&$\varphi$ & 4 & & & $\otimes$& \\
 			\hline
 			(7) & $1:$&$\non(\varphi)^{c_i}$ & &1 & $1:$&$c_i $ & (6)\\
 			\hline
 			&&$\otimes$ &  & 7 & $1:$&$(\varphi)^{c_i}$ & (8)\\
 			\hline
 			(9)& &$c_i$ &  8& & $1:$&$\varphi$& (10)
 			\\
 			\hline\hline
		\end{tabular}
\end{center}
\end{small}
In both plays, the opponent could attack $\varphi$ after move (10), but as he has already uttered the same formula before, and as the proponent can attack the same argument several times,
any strategy deployed by \textbf{O} will be turned back as a winning strategy by \textbf{P}.\\

The other implications obtain similarly from the definitions.
\end{proof}


\section{Epistemological Applications \label{applications}}

\subsection{Epistemological positions formalized}

 
Dialogical \textsf{CEL} provides us with powerful  
tools for gaining insights into the informal epistemological debate  over the context-relativity of knowledge claims presented in section \ref{contextepist}.
To illustrate this, let us first show how the four epistemological  positions alluded to earlier -- scepticism, antiscepticism,  contextualism and subjectivism -- can be captured within our  contextual logico-epistemic framework. 

In section \ref{contextepist} the four epistemological positions were introduced in  terms of the ``ruling out'' of ``epistemically relevant  counter-possibilities'', which it is quite natural to understand in  terms of \textsf{S5} epistemic accessibility relations. For if we take  it that $\R_iww'$ iff agent $i$ cannot tell $w$ from $w'$ from all  the information available to him at $w$, we can read ``$i$ can rule  out $w'$ on the basis of the information he has in $w$'' as ``$w'$ is  not epistemically accessible for $i$ from $w$''. Moreover, the appropriate epistemic accessibility relations must be equivalence relations, for it is quite natural to think that a possible world is  ruled out by a subject as soon as it is not \emph{exactly the same as} the actual world with respect to the totality of the subject's evidence or information, that is, if it differs, were it only in a minimal way, from the actual world in that respect;\footnote{Two worlds may differ
in numerous respects, and yet be exactly the same with respect to the evidence (conceived of in
internalistic terms) at an agent's disposal. For instance, an Evil Genius world would be very different
from what we take our world to be, but as the sceptical argument goes, in it we would have exactly
the same evidence as we have in our world.} and \emph{being exactly the same as} is an equivalence relation.

Now, the four epistemological positions are positions for or against  the relativity of knowledge claims to a given type of context --  relativity to the ``knowledge ascriber's'' or ``attributor's'' context  for contextualism, to the ``knower's'' or ``knowing subject's''  context for subjectivism, and no relativity to any context for  scepticism and anti-scepticism. In order to capture these differences  within our contextual logico-epistemic framework, let us opt for the  following conventions: 

\begin{itemize} 

\item When dealing with a contextual operator $(\cdot)^{c_i}$, we  shall take it that the context $c_i$ stands for the set of formulas  that are being presupposed or taken for granted by the agent or group  of agents $i$. So we shall keep in mind that a context $c_i$ may be  understood as the result or/and background of a conversation between  several agents -- what they all take for granted for the purpose of  their linguistic interactions --, if $i$ stands for a group of such  agents, as well as it may be understood as the result or/and  background of an agent's ``conversation'' with himself -- e.g. what he  takes for granted for the purpose of his current reflections --, if  $i$ is a single agent; 

\item In general, $(\varphi)^{c_i}$ shall be read as ``it follows  from context $c_i$ that $\varphi$''. In particular, if  $(\K_j\varphi)^{c_i}$ can be read as ``it follows from context $c_i$  that $j$ knows that $\varphi$'', it may better be read as ``in context $c_i$, $j$ counts  as knowing that $\varphi$'', that is ``given what is  taken for granted by $i$ \ldots''. Moreover, when we have a formula of the  form $(\K_j\varphi)^{c_i}$, we shall take it that the agent (or group)  $i$ is to be called the ``attributor'' and $c_i$ ``attributor $i$'s  context'', and the agent (or group) $j$ is to be called the  ``subject''. On the other hand, if we need to reflect on a $c_x$,  depending on whether $x = i$ or $x = j$, we shall speak of $c_x$ as of  the ``attributor's context'' or as of the ``subject's context''. 

\end{itemize} 
With these conventions in hand, we can now establish correspondences  between the explicitly exponented knowledge operators we distinguished  and the different epistemological positions. 
\subsubsection{Scepticism and Anti-scepticism}
The two \textit{absolutist} views -- scepticism and anti-scepticism -- can both  be associated with the knowledge operator $\K_j^{1.1}$, whose  behaviour with respect to contextual relativization was fully  characterized by the following reduction axiom in \textsf{CEL}: 

\begin{quotation}
	\begin{tabular}{lll} $\vdash_{\textsf{CEL}} (\K_j^{1.1}\varphi)^{c_i} \leftrightarrow (c_i  \rightarrow\K_j^{1.1}(\varphi)^{c_i})$ \end{tabular} 
\end{quotation}
\noindent This axiom says that for subject $j$ to count as knowing  that $\varphi$ in attributor $i$'s context $c_i$, it must follow from  this context that $j$ knows that $\varphi$ holds in that same context.  So it is always the very same attributor's context against which an  agent will count as knowing something. This is precisely what the two  absolutist views about knowledge claims have in common: epistemic  standards and relevance sets are not contextually variable but  constant matters. The only difference between scepticism and  anti-scepticism is that for the former the epistemic standards are too  stringent to ever be met while for the latter they are lax enough to  be met very often and possibly most of the time. So, the appropriate  translation from the informal sceptic/anti-sceptic talk into the  formal $\K_j^{1.1}$ talk simply consists in substituting the  appropriate contexts -- $c_{scep}$ and $c_{anti}$ respectively -- for $c_i$ in the axiom above,
and in adding the following condition: 
\begin{quotation}
\begin{tabular}{lll} 

$c_{anti} \rightarrow c_{scep}$, 

\end{tabular} 
\end{quotation}
\noindent but not \emph{vice versa}. This is the most minimal way to  capture the idea that while $c_{anti}$ excludes all the far-fetched  possibilities arising from radical sceptical concerns and encapsulates  a rather large set of presuppositions shared by most agents in their  ordinary talk about knowledge, $c_{scep}$ excludes all such  presuppositions and encapsulates a rather large set of far-fetched  possibilities.\footnote{What we said in footnote \ref{scepticismdef}
about the definition of anti-scepticism in the contextual model  
framework holds \textit{mutatis mutandis} in the current framework.  
One can work with the version of anti-scepticism they favor simply by  
specifying what, according to their version, can be considered the  
constant set of epistemically relevant presuppositions, that is, the  
constant set of literals that constitute $c_{anti}$.} 
For instance, the sceptic about contingent truths may  not be a sceptic about necessary truths and thus may take the  proposition that $2 \times 2 = 4$ into his $c_{scep}$, just as the  anti-sceptic has this in his $c_{anti}$; but whilst the anti-sceptic  will also have the proposition that he is not a victim of an Evil  Genius in his $c_{anti}$, the sceptic will not let this into his  $c_{scep}$. Actually, $c_{scep}$ can be identified with $\top$, which means that 
context-relativization according to scepticism is no relativization at all.

\subsubsection{Contextualism and subjectivism}

Contrary to the absolutist views, the two relativist views, according  to which epistemic standards and relevance sets are contextually  variable, fall under different knowledge operators. Contextualism is  the view that the variability in question is a variability according  to the attributor's context, not the subject's. It is thus natural to  associate contextualism with operator $\K_j^{1.2}$, whose behaviour  with respect to contextual relativization was fully characterized by: 
\begin{quotation}
\begin{tabular}{lll} $\vdash_{\textsf{CEL}} (\K_j^{1.2}\varphi)^{c_i} \leftrightarrow (c_i  \rightarrow\K_j^{1.2}(\varphi)^{c_j})$, \end{tabular} 
\end{quotation}
\noindent for this axiom says that for subject $j$ to count as knowing  that $\varphi$ in attributor $i$'s context $c_i$, it must follow from  \emph{this} context that $j$ knows that $\varphi$ holds in $j$'s own  context, that is, in the context whose attributor is subject $j$  himself. So it is not always the same attributor's context against  which something will count as being known. Relativity to the  attributor's context is thus encapsulated in knowledge operator  $K_j^{1.2}$. In contrast, relativity to the subject's context is  encapsulated in knowledge operator $K_j^{2.2}$, since the reduction  axiom characterizing its behaviour when contextualized was: 
\begin{quotation}
\begin{tabular}{lll} $\vdash_{\textsf{CEL}} (\K_j^{2.2}\varphi)^{c_i} \leftrightarrow (c_j  \rightarrow\K_j^{2.2}(\varphi)^{c_j})$, \end{tabular} 
\end{quotation}
\noindent which says that for subject $j$ to count as knowing that  $\varphi$ in attributor $i$'s context $c_i$, it must follow from  subject $j$'s own context that $j$ knows that $\varphi$ holds in the  same context, that is, subject $j$'s context. We can thus naturally  associate the subjectivist view, according to which epistemic  standards and relevance sets shift with the subject's context, with  operator $\K_j^{2.2}$.

\subsubsection{What about the remaining operator?}

One may ask which epistemological view could match the  
$\mathbf{K}^{2.1}$-operator. In our opinion the main interest of this  
operator does not lie so much in its possible correspondence with a  
view to be found in the epistemological literature as in the means it  
would offer us to handle indexicality phenomena, for instance if we  
wanted to incorporate personal pronouns like `I' and `he'. Consider a  
true utterance of `He knows that I am here'. To formally account for  
its truth, we would need a knowledge operator whose logical behavior  
would match the following equivalence:  
$(\mathbf{K}_{\texttt{\textit{he}}}\texttt{\textit{I am  
here}})^{c_{\texttt{\textit{I}}}} \leftrightarrow (  
c_{\texttt{\textit{he}}} \rightarrow  
\mathbf{K}_{\texttt{\textit{he}}}(\texttt{\textit{I am  
here}})^{c_{\texttt{\textit{I}}}} )$, where  
${c_{\texttt{\textit{I}}}}$ is the context of the agent who uses `I'  
and $c_{\texttt{\textit{he}}}$ the context of the agent refered to by  
`he'. Of course, we would not thereby have accounted for even a bit of  
the great complexity of natural language indexicality, and it is not  
our intention to do so at all.

\subsection{Knowledge features uncovered}

This formal translation of the four epistemological positions can now  be exploited within our contextual logico-epistemic framework. In the  remainder of this section we will use dialogical \textsf{CEL} to  illustrate how differences between the four epistemological  positions that have been pointed out in the contemporary philosophical  literature on knowledge can be recovered within our formal framework,  as well as to illustrate how differences that have not been touched  upon in the literature can be discovered through that framework. We  will give four such illustrations. 

\vspace{-0,2cm}
\subsubsection{Normality}

One thing to note is that the three operators $\K_j^{1.1}$,  $\K_j^{1.2}$, $\K_j^{2.2}$, behave in the same manner with respect to  the contextualized version of the K axiom for knowledge, viz.: 

\begin{tabular}{lll} $\vdash_{\textsf{CEL}}(\K_j^{u.v}\varphi \wedge  \K_j^{u.v}(\varphi\rightarrow\psi)) \rightarrow \K_j^{u.v}\psi)^{c_i}$. \end{tabular}

\noindent For any value of $\varphi$, $\psi$, $u$ and $v$, this is a  theorem of \textsf{CEL} and it is valid with respect to the class  $\M^{rst}$ of Kripke models with equivalence accessibility relations.  Although it is a trivial result from a logico-epistemic point of view,  it is not from an epistemological one. One can tell from the  literature that it is important for advocates of the four identified  epistemological positions that knowledge closure under known material  implication holds. This is crucial to both the famous sceptical  argument from ignorance and to the famous Moorean anti-sceptical  response to it. Even those who take it that knowledge claims are  context-relative admit that closure holds while insisting that it  holds only within contexts and not across contexts (see \cite{Lewis96}  for the contextualist case and \cite{Hawthorne04} for the subjectivist  case, for instance). 

\vspace{-0,1cm}
\subsubsection{Factivity (or not)}

A more interesting result is that our framework clearly establishes a  difference between case (1.1) -- absolutists -- and cases (1.2) and  (2.2) -- relativists -- with respect to the contextualized version of  the T axiom -- call it $(T)^c$: 

\begin{tabular}{lcl} $(\K_j^{u.v}\varphi\rightarrow\varphi)^{c_i}$. \end{tabular} 

\noindent For consider the following example of \textsf{CEL}-dialogical games for  $(T)^c$, where $\varphi$ is an epistemic formula:

\begin{example} In what follows, we compare the factivity of contextual knowledge according to 1.1, 1.2 and 2.2. In particular, we will check whether $(\K_j \K_k p \to \K_k p)^{c_i}$ is valid or not.
\vspace{-0,2cm}
\begin{small}
\begin{center}
1.1-version: $(\K^{1.1}_j \K^{1.1}_k p \to \K^{1.1}_k p)^{c_i}$
		\begin{tabular}{||l rl r||l rl r||}
			\hline\hline
			\hspace{0.1cm} && \hspace{1,5cm} & \hspace{0.2cm} & \hspace{0.2cm} && \hspace{1,5cm} & \hspace{0.1cm} \\
			 \textbf{1.1}&& \textbf{O} & & & \textbf{P} &&  \\
 			\hline\hline
 			 && & & &  
 			 $1:$&$(\K_j \K_k p \to \K_k p)^{c_i}$ & (0) \\
 			\hline
 			
 			 (1) & $1:$&${c_i}$ & 0 & 
 			 & $1:$&$(\K_j \K_k p)^{c_i} \to (\K_k p)^{c_i}$ & (2)\\
 			\hline
 			
 			(3) & $1:$&$(\K_j \K_k p)^{c_i}$ & 2 & 
 			& $1:$&$(\K_k p)^{c_i}$ & (4)\\
 			\hline
 			
 			(5) & $1:$&$c_i$ & 4 & 
 			& $1:$&$\K_k (p)^{c_i}$ & (6)\\
 			\hline
 			
 			(7) & $1:$&$?_{\K_k/1k1}$ & 6 & 
 			& $1k1:$&$(p)^{c_i}$ & (8)\\
 			\hline
 			
 			(9) & $1k1:$&$c_i$ & 8 & 
 			& $1k1:$&$p$ & (20)\\
 			\hline
 			
 			(11) & $1:$&$\K_j(\K_k p)^{c_i}$ & & 
 			3 & $1:$&$c_i$ & (10)\\
 			\hline
 			
 			(13) & $1:$&$(\K_k p)^{c_i}$ & & 
 			11 & $1:$&$?_{K_j/1}$ & (12)\\
 			\hline
 			
 			(15) & $1:$&$\K_k (p)^{c_i}$ & & 
 			13 & $1:$&$c_i$ & (14)\\
 			\hline
 			 			
 			(17) & $1k1:$&$(p)^{c_i}$ & & 
 			15 & $1:$&$?_{K_k/1k1}$ & (16)\\
 			\hline

 			(19) & $1k1:$&$p$ & & 
 			17 & $1k1:$&$c_i$ & (18)\\
 			\hline
 			 					
		\hline\hline
		\end{tabular}
\vspace{0,1cm}\\\tab\textbf{P} wins the play
\end{center}
\end{small}

\begin{small}
\begin{center}
1.2-version: $(\K^{1.2}_j \K^{1.2}_k p \to \K^{1.2}_k p)^{c_i}$
		\begin{tabular}{||l rl r||l rl r||}
			\hline\hline
			\hspace{0.1cm} && \hspace{1,5cm} & \hspace{0.2cm} & \hspace{0.2cm} && \hspace{1,5cm} & \hspace{0.1cm} \\
			 \textbf{1.2}&& \textbf{O} & & & \textbf{P} &&  \\
 			\hline\hline
 			 && & & &  
 			 $1:$&$(\K_j \K_k p \to \K_k p)^{c_i}$ & (0) \\
 			\hline
 			
 			 (1) & $1:$&${c_i}$ & 0 & 
 			 & $1:$&$(\K_j \K_k p)^{c_i} \to (\K_k p)^{c_i}$ & (2)\\
 			\hline
 			
 			(3) & $1:$&$(\K_j \K_k p)^{c_i}$ & 2 & 
 			& $1:$&$(\K_k p)^{c_i}$ & (4)\\
 			\hline
 			
 			(5) & $1:$&$c_i$ & 4 & 
 			& $1:$&$\K_k (p)^{c_k}$ & (6)\\
 			\hline
 			
 			(7) & $1:$&$?_{\K_k/1k1}$ & 6 & 
 			& $1k1:$&$(p)^{c_k}$ & (8)\\
 			\hline
 			
 			(9) & $1k1:$&$c_k$ & 8 & 
 			& & & \\
 			\hline
 			
 			(11) & $1:$&$\K_j(\K_k p)^{c_j}$ & & 
 			3 & $1:$&$c_i$ & (10)\\
 			\hline
 			
 			(13) & $1:$&$(\K_k p)^{c_j}$ & & 
 			11 & $1:$&$?_{K_j/1}$ & (12)\\

		\hline\hline
		\end{tabular}
\\\tab\\\textbf{O} wins the play
\end{center}
\end{small}
Short comment: After move (13) \textbf{P} would have to utter ``$1:c_j$''; but this context has not been previously introduced by \textbf{O}, so she cannot.\\

\begin{small}
\begin{center}
2.2-version: $(\K^{2.2}_j \K^{2.2}_k p \to \K^{2.2}_k p)^{c_i}$		\begin{tabular}{||l rl r||l rl r||}
			\hline\hline
			\hspace{0.1cm} && \hspace{1,5cm} & \hspace{0.2cm} & \hspace{0.2cm} && \hspace{1,5cm} & \hspace{0.1cm} \\
			 \textbf{2.2}&& \textbf{O} & & & \textbf{P} &&  \\
 			\hline\hline
 			 && & & &  
 			 $1:$&$(\K_j \K_k p \to \K_k p)^{c_i}$ & (0) \\
 			\hline
 			
 			 (1) & $1:$&${c_i}$ & 0 & 
 			 & $1:$&$(\K_j \K_k p)^{c_i} \to (\K_k p)^{c_i}$ & (2)\\
 			\hline
 			
 			(3) & $1:$&$(\K_j \K_k p)^{c_i}$ & 2 & 
 			& $1:$&$(\K_k p)^{c_i}$ & (4)\\
 			\hline
 			
 			(5) & $1:$&$c_k$ & 4 & 
 			& $1:$&$\K_k (p)^{c_k}$ & (6)\\
 			\hline
 			
 			(7) & $1:$&$?_{\K_k/1k1}$ & 6 & 
 			& $1k1:$&$(p)^{c_k}$ & (8)\\
 			\hline
 			
 			(9) & $1k1:$&$c_k$ & 8 & 
 			& & & \\

		\hline\hline
		\end{tabular}
\\\tab\\\textbf{O} wins the play
\end{center}
\end{small}
Short comment: Here \textbf{P} cannot even attack (3): with (2.2), she would have to utter ``$1:c_j$'', which has not been introduced by \textbf{O}.
{\hspace*{\fill} $\blacksquare$}
\end{example}

What this shows is that absolutist contextual knowledge is always  factive while relativist contextual knowledge is not always factive.  More specifically, $(T)^c$ holds for relativist knowledge operators  $\K_j^{1.2}$ and $\K_j^{2.2}$ when they bear on ``absolute''  non-epistemic formulas, but not in the general case.

A precision is required here: There is a possible loss of factivity for contextualism or subjectivism on absolute formulas, but this would be a trivial one like the loss of factivity possibly occurring in standard multi-modal epistemic logic:  the formula $\K_j p \to p$ is trivially falsified at any world $w^\star$ lying beyond the scope of the accessibility relation $\R_j$  where $p$ is false. Analogous cases in $\CEL$ are formulas like $(\K_j p \to p)^{c_i}$, which are falsified at worlds where $c_i$ is true while $c_j$ and $p$ are false; but of course, $c_i$ is as irrelevant for the agent $i$'s context-relativized knowledge as is the world $w^\star$ for his absolute knowledge. However, the situation is different with epistemic formulas like the ones just evaluated through dialogical games: here the context used to falsify the formulas is perfectly relevant to evaluate the agents' knowledge, as is seen in the Figure below: 
\begin{figure}[htbp]
	\centering
		\includegraphics[width=0.5\textwidth]{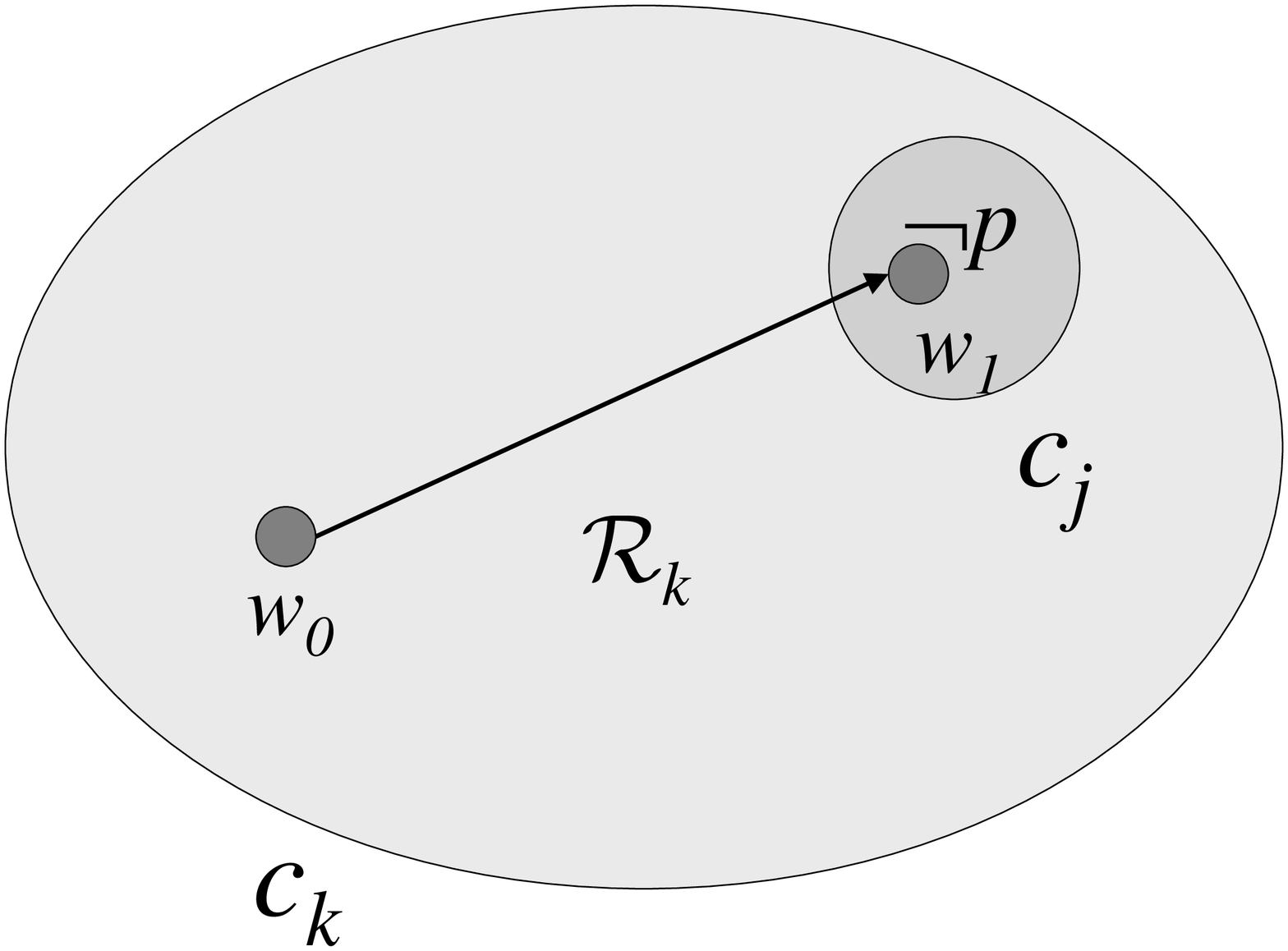}
		\caption{\textit{A counter-model to }$(\K^{\cdot.2}_j \K^{\cdot.2}_k p \to \K^{\cdot.2}_k p)^{c_i}$}
\end{figure}

To our knowledge this result that factivity for contextualism and subjectivism is restricted to knowledge of the world (\emph{versus} of other people's knowledge) has never been highlighted in the epistemological literature\footnote{However, see Stanley \cite{Stanley05} for a discussion of factivity and related matters in a subjectivist setting.}. 

\subsubsection{Context-relativized introspection}

Another first for epistemological discussions over knowledge and  context is the following difference between contextualism and  subjectivism: if subjectivism is true, what one knows in one's own  context, one also knows that one knows it in anyone else's context;  whereas if contextualism is true, what one knows in one's own context,  one may not know that one knows it in anyone else's context. This is  clear from the dialogical games in examples \ref{ex4} and \ref{ex5} in the previous  section, which made it explicit that: 
\begin{itemize} 
\item  $\not\vdash_{\textsf{CEL}}(\K_i^{1.2}p)^{c_i}\rightarrow(\K_i^{1.2}\K_i^{1.2}p)^{c_j}$ 
\item  $\vdash_{\textsf{CEL}}(\K_i^{2.2}p)^{c_i}\rightarrow(\K_i^{2.2}\K_i^{2.2}p)^{c_j}$ 
\end{itemize} 
What this means is that for a subjectivist agent $i$ -- case (2.2) --,  if it follows from the context $c_i$ of which he is the attributor  that he knows that $p$, then it follows from any other attributor  $j$'s context that he knows that $p$; while for a contextualist agent,  this is not true. This can be explained informally as follows: 

\begin{itemize} 

\item (Subjectivist case) If subjectivism is true, then for agent  $i$ to know that $p$, he must meet the standards in vigour in his own  context $c_i$ for knowing that $p$. But he will then \emph{ipso facto}  meet the standards for his knowing that he knows that $p$. For if he  did not know that he knows that $p$, that would be because he  considers it possible that he does not know that $p$; but he cannot  consider this a serious possibility if he already knows that $p$. And  if $i$ counts as knowing that he knows that $p$ in his own context,  and since we are dealing with subjectivism, $i$ will count as knowing  that he knows that $p$ in any other agent $j$'s context. 

\item (Contextualist case) If contextualism is true, then the  relevant standards for knowing a proposition may vary from one  attributor's context to that of another. This being so, agent $i$ may count as  knowing that $p$ relative to the context $c_i$ of which he is the  attributor, whilst not relative to the context $c_j$ of another  attributor $j$ associated with more stringent standards than in $c_i$.  In this case, since one cannot know what is false and since it is  false in $c_j$ that $i$ knows that $p$, it is false in this same $c_j$  that $i$ knows that he knows that $p$. 

\end{itemize} 
It is an advantage of our formal framework that it can capture this  informal difference between contextualism and subjectivism.

\subsubsection{Mixing agents}

Another interesting feature of that framework is that it allows us to  reason about knowledge in a group of epistemologically heterogeneous  agents and to answer formally such informal questions as ``If a  contextualist knows that a subjectivist knows this or that, does the  contextualist know this or that?''\footnote{Suppose that agent $i$ is a $F$-ist (a sceptic, an anti-sceptic, a contextualist, or a subjectivist). If $i$ is coherent with his own theory of knowledge (ascriptions), which he intends to be the only correct one for \emph{any} agent's knowledge, he ought to reason accordingly not only about his own knowledge but also about other agent's knowledge. This amounts to saying \emph{both} that his reasoning about knowledge ought to meet the subjectivist expectations and that he ought to expect other people to meet the same expectations when they reason about knowledge. That is our motivation for talking about sceptical, anti-sceptical, contextualist, and subjectivist \emph{agents} and for asking what they can know about what other types of agents know.} Here we give only one example we  find interesting of that feature through the following question: if an  absolutist agent knows that a subjectivist agent knows a proposition  relative to a context, does the subjectivist know that proposition  relative to that context? In our framework this question becomes that  of deciding whether the following formula is a theorem of \textsf{CEL}: 
\begin{quotation}
\begin{tabular}{lcl} $(\K_j^{1.1}\K_k^{2.2}p)^{c_i} \rightarrow (\K_k^{2.2}p)^{c_i}$. \end{tabular} 
\end{quotation}
\noindent Now, this question is easily settled by means of the  following \textsf{CEL}-dialogical game: 

\begin{example} In the following example, we consider simultaneously agents of different kinds. The upshot is a kind of failure of Axiom T for an absolutist knower.
\begin{small}
\begin{center}
		\begin{tabular}{||l rl r||l rl r||}
			\hline\hline
			\hspace{0.1cm} && \hspace{1,5cm} & \hspace{0.2cm} & \hspace{0.2cm} && \hspace{1,5cm} & \hspace{0.1cm} \\
			 && \textbf{O} & & & \textbf{P} &&  \\
 			\hline\hline
 			 && & & &  
 			 $1:$&$(\K_j^{1.1} \K_k^{2.2} p)^{c_i} \to (\K_k^{2.2}p)^{c_i}$ & (0) \\
 			\hline
 			
 			(1) & $1:$&$(\K_j^{1.1} \K_k^{2.2} p)^{c_i}$ & 0 & 
 			& $1:$&$(\K_k^{2.2} p)^{c_i}$ & (2)\\
 			\hline
 			
 			(3) & $1:$&$c_k$ & 2 & 
 			& $1:$&$\K_k^{2.2} (p)^{c_k}$ & (4)\\
 			\hline
 			
 			(5) & $1:$&$?_{\K_k/1k1}$ & 6 & 
 			& $1k1:$&$(p)^{c_k}$ & (8)\\
 			\hline
 			
 			(9) & $1k1:$&$c_k$ & 8 & 
 			& & & \\
 			\hline

		\hline\hline
		\end{tabular}
\\\tab\\\textbf{O} wins the play
\end{center}
\end{small}
Short comment: \textbf{P} cannot go on and attack (1), for it would require that she utter ``$1:c_i$'', which has not been introduced by \textbf{O}.
{\hspace*{\fill} $\blacksquare$}
\end{example}

 \noindent Thus, the formula is not a principle of our contextual  epistemic logic. This comes as an amendment to, or better as a  complement to what we said earlier about the factivity of absolutist  knowledge. For the lesson to be drawn from this is, roughly, that an  absolutist agent (a sceptic, an anti-sceptic) may know something  without this something being true when this something is about what a  subjectivist agent knows. 

Incidentally, we may notice that the following formula, differing from  the previous one in that the subscripted agent is now the same for  each occurrence of a knowledge operator, is not a principle of our  contextual epistemic logic either: 
\begin{quotation}
\begin{tabular}{lcl} $(\K_j^{1.1}\K_j^{2.2}p)^{c_i} \rightarrow (\K_j^{2.2}p)^{c_i}$. \end{tabular} 
\end{quotation}
\noindent Funnily enough, this could be interpreted in terms of  ``epistemically schizo-phrenic'' agents whose knowledge is  compartmented in the sense that they know different things when they  are in a subjectivist or in a sceptical mood from what they know when they are in an  anti-sceptical or in a contextualist mood, or in the sense that the  subjectivist or the sceptical part of them knows different things from  what the anti-sceptical or the contextualist part of them knows. Then  the lesson to be drawn from the result in question would be that what  your sceptic or anti-sceptic compartment knows about your subjectivist  compartment, your subjectivist compartment may not know of itself.

\section{Conclusion}
\enlargethispage*{0,2cm}

Our primary goal in this paper was to investigate the relationships  
between knowledge and context in the formal framework of epistemic 
modal logic. We thus provided an epistemic logic with context relativization, \textsf{CEL}, together with its dialogical semantics, and applied it to epistemological issues.

The subsequent results can be interesting both from a  
logical and from an epistemological point of view. From the former 
point of view, the interesting upshot is that the logic of public announcements can be translated into a logic for context. The interaction between knowledge and context is slightly more subtle than that of knowledge with announcements, but the result is really close to \textsf{PAL}.

From the epistemological point of  
view, this time, the interesting result is that \textsf{CEL} provides  
us with a powerful formal tool not only for capturing informal views  
about knowledge and context, but also for gaining new insights into  
the debates over their possible interconnections, contributing thereby  
to the current research program in formal epistemology, at the  
interface of logic and the theory of knowledge.

Let us just add that two typical ``Rahmanian issues'' came out  from this work. First, the dialogical version of \textsf{PAL} and its application to epistemology constitute a new confirmation of the fruitfulness of dialogical logic as a framework to combine different logics. Second, we considered diverging agents mutually reasoning about their respective knowledge; this strongly echoes Shahid's recent work \cite{Rahman06} about non-normal logics, classical agents reasoning about intuitionistic ones. Of course, all our agents are normal (and even \textsf{S5}) knowers; anyway as epistemologists, some of them appear to be strange, at least.\\


\paragraph{Acknowledgements} The authors wish to thank Berit Brogaard,
Bertram Kienzle, Helge Rückert, Tero Tulenheimo and an anonymous referee for their inspiring comments and suggestions about earlier versions of this paper.


\begin{thebibliography}{99}

\bibitem{Austin46} Austin, J., 1946: Other Minds, \emph{Proceedings of the Aristotelian Society, Supplementary Volume} \textbf{20}, 148--187.

\bibitem{vanBenthem00} van Benthem, J., 2000: Information Update as Relativization, ILLC, University of Amsterdam,  Available from
\textsf{http://staff.science.uva.nl/$\sim$johan/upd=Rel.pdf}.

\bibitem{vanBenthem06}van Benthem, J., 2006: Epistemic Logic and Epistemology: The State of Their Affairs, \textit{Philosophical Studies} \textbf{128}: 49--76. 

\bibitem{vanBenthemetal06}van Benthem, J., J. van Eijck \& B. Kooi, 2006: Logics of Communication and Change, \textit{Information and Computation} \textbf{204}, 11, 1620--1662.

\bibitem{Blackburn01} Blackburn, P., 2001: Modal Logic as Dialogical Logic, \textit{Synthese} \textbf{127} (1--2): 57--93.

\bibitem{Egré06} Bonnay, D. \& P. \'Egr\'e, 2006: A Non-Standard Semantics for Inexact Knowledge with Introspection, \emph{in} Artemov, S. \& R. Parikh (eds.), \emph{Proceedings of the Workshop on Rationality and Knowledge ESSLLI 2006}.

\bibitem{Cohen00} Cohen, S., 2000: Contextualism and Skepticism, \emph{in} Sosa E. and E. Villanueva (eds.), \emph{Scepticism: Philosophical Issues}, Vol.10, Basil Blackwell: 94--107.

\bibitem{DeRose95} DeRose, K., 1995: Solving the Skeptical Problem, \emph{Philosophical Review} \textbf{104}, 1--52.

\bibitem{Dretske70} Dretske, F. I., 1970: Epistemic Operators, 
\emph{Journal of Philosophy} \textbf{67}, 1007--23.

\bibitem{Dretske81} Dretske, F. I., 1981: The Pragmatic Dimension of Knowledge, \emph{Philosophical Studies} \textbf{40}, 363--78.

\bibitem{Fagin88} Fagin R. \& J. Y. Halpern, 1988: Belief, Awareness, and Limited Reasoning, \emph{Artificial Intelligence} \textbf{34}, 39--76.

\bibitem{GargovEtAl87} Gargov, G., S. Passy \& T. Tinchev, 1987: Modal Environment for Boolean Speculations, in D. Scordev (ed.), \textit{Mathematical Logic and its Applications}, New York, Plenum Press, 253--263.

\bibitem{Hawthorne04} Hawthorne, J., 2004: \emph{Knowledge and Lotteries}, Oxford University Press.

\bibitem{Hendricks04} Hendricks, V. F., 2004: Hintikka on Epistemological Axiomatizations, \emph{in} Kolak D. \& J. Symons (eds.), \emph{Questions, Quantifiers and Quantum physics: Essays on the Philosophy of Jaakko Hintikka}, Springer: 3--34.

\bibitem{Hendricks06a} Hendricks, V. F., 2006: \emph{Mainstream and Formal Epistemology}, Cambridge University Press.

\bibitem{Hendricks06b} Hendricks, V. \& D. Pritchard (eds.), 2006: \emph{New Waves in Epistemology}, Ashgate.

\bibitem{Hintikka62} Hintikka, J., 1962: \emph{Knowledge and Belief: An Introduction to the Logic of the Two Notions}, Cornell University Press.

\bibitem{Kaplan89} Kaplan, D., 1989: Demonstratives, \emph{in} J. Almog, J.
Perry, \& H. Wettstein (eds.), \textit{Themes from Kaplan}: 481--563.


\bibitem{Lewis96} Lewis, D. K., 1996: Elusive Knowledge, \emph{Australasian Journal of Philosophy} \textbf{74}, 549--67.

\bibitem{LihoreauRebuschi07} Lihoreau, F. \&  M. Rebuschi, 2007: Raisonner sur la connaissance en contexte, forthcoming in \emph{Actes du Colloque International `Computers and Philosophy' 2006}, Editions CNRS.

\bibitem{Ludlow05} Ludlow, P., 2005: Contextualism and the New Linguistic Turn In Epistemology, in \textit{Contextualism in Philosophy} (G. Preyer and G. Peter, eds.), Oxford, Oxford University Press.

\bibitem{MacFarlane05} MacFarlane, J., 2005: The Assessment Sensitivity of Knowledge Attributions, \emph{in} Szabó, T. \& J. Hawthorne (eds.), \emph{Oxford Studies in Epistemology 1}, Oxford University Press: 197--233.

\bibitem{Moore62} Moore, G. E., 1962: \emph{Commonplace Book: 1919--1953}, Allen \& Unwin.

\bibitem{Nozick81} Nozick, R., 1981: \emph{Philosophical Explanations}, Oxford University Press.

\bibitem{Plaza89} Plaza, J. A., 1989: Logics of Public Communications, in Emrich, M. \emph{et al.} (eds.), \emph{Proceedings of the $4^{th}$ International Symposium on Methodologies for Intelligent Systems}, North-Holland: 201--216.

\bibitem{Rahman06} Rahman, S., 2006: Non-Normal Dialogics for a Wonderful World and More, in J. van Benthem \textit{et al}, eds., \textit{The Age of Alternative Logics}, Springer, Dordrecht, 311--334.

\bibitem{RahmanRuckert99} Rahman, S. \& H. Rückert, 1999: Dialogische Modallogik für T, B, S4 und S5, \textit{Logique et Analyse}, \textbf{167-168}, 243--282. 

\bibitem{Rantala82} Rantala, V., 1982: Impossible World Semantics and Logical Omniscience, \emph{Acta Philosophica Fennica} \textbf{35}, 18--24.

\bibitem{Rebuschi05} Rebuschi, M., 2005: Implicit vs. Explicit Knowledge in Dialogical Logic, (to appear) \textit{in} O. Majer et al. (eds.), 200?: \textit{Logic, Games and Philosophy: Foundational Perspectives}, Dordrecht, Springer. 

\bibitem{Richard04} Richard, M., 2004: Contextualism and Relativism, \emph{Philosophical Studies} \textbf{119}, 215--42.

\bibitem{Schlenker03} Schlenker, Ph., 2003: A Plea for Monsters,
\textit{Linguistics and Philosophy} \textbf{26}: 29--120.

\bibitem{Stalnaker06} Stalnaker, R., 2006: On the Logics of Knowledge and Belief, \emph{Philosophical Studies} \textbf{128}, 169--199.

\bibitem{Stanley05} Stanley, J., 2005: \emph{Knowledge and Practical Interests}, Oxford University Press.

\bibitem{Unger71} Unger, P., 1971: A Defense of Scepticism, \emph{Philosophical Review} \textbf{80}, 198--219.

\end{thebibliography}
\end{document}